\newcommand{\assign}{:=}
\numberwithin{equation}{section} 
\newcommand{\nocomma}{}
\newcommand{\noplus}{}
\newcommand{\tmop}[1]{\ensuremath{\operatorname{#1}}}
\newcommand{\tmstrong}[1]{\textbf{#1}}
\newcommand{\tmtextbf}[1]{{\bfseries{#1}}}
\newcommand{\tmtexttt}[1]{{\ttfamily{#1}}}
\newenvironment{proof}{\noindent\textbf{Proof\ }}{\hspace*{\fill}$\Box$\medskip}
\newtheorem{corollary}{Corollary}
\newtheorem{lemma}{Lemma}
\newtheorem{proposition}{Proposition}
{\theorembodyfont{\rmfamily}}
\newtheorem{theorem}{Theorem}
\newtheorem{Conjecture}{Conjecture}
\newcommand{\XXint}[3]{{\setbox}0=\text{\ensuremath{#1 #2 #3 \int}}
{\vcenter{\text{\ensuremath{#2 #3}}}}{\kern}-.5{\tmwd}0}
\newcommand{\opn}[2]{\newcommand{\1}{\}} {\opn}{\Rm{Rm}} {\opn}{\Ric{Ric}}
{\opn}{\Rc{Rc}} {\opn}{\Scal{Sc}} {\opn}{\Tr{Tr}} {\opn}{\Trac{Tr}}
{\opn}detdet {\opn}{\diam{diam}} {\opn}{\dist{dist}} {\opn}{\Im}Im
{\opn}{\div}div {\opn}{\Ker{Ker}} {\opn}expexp {\opn}{\Vol{Vol}}
{\opn}{\exph{exph}} {\opn}{\Herm{Herm}} {\opn}{\End{End}} {\opn}{\Hess{Hess}}
{\opn}{\Vol{Vol}}}
\newcommand{\contract}{{\kern}-1.5pt{\vrule} width6.0pt height0.4pt depth0pt
{\vrule} width0.4pt height4.0pt depth0pt}
\newcommand{\retract}{{\kern}-1.5pt{\vrule} width0.4pt height4.0pt depth0pt
{\vrule} width6.0pt height0.4pt depth0pt}
\newcommand{\Openbox}{{\leavevmode} {\hfil}{\vrule} width{\boxrulethickness}
{\vbox} to{\Openboxwidth{{\advance}{\Openboxwidth} -2{\boxrulethickness}
{\hrule} height {\boxrulethickness} width{\Openboxwidth}{\vfil} {\hrule}
height{\boxrulethickness}}}{\vrule} width{\boxrulethickness}{\hfil} }
\begin{document}

\title{Chern-Ricci invariance along $G$-geodesics}\author{\\
{{NEFTON PALI}}}\date{}\maketitle

\begin{abstract}
  Over a compact oriented manifold, the space of Riemannian metrics and
  normalized positive volume forms admits a natural pseudo-Riemannian metric
  $G$, which is useful for the study of Perelman's $\mathcal{W}$ functional.
  We show that if the initial speed of a $G$-geodesic is $G$-orthogonal to the
  tangent space to the orbit of the initial point, under the action of the
  diffeomorphism group, then this property is preserved along all points of
  the $G$-geodesic. We show also that this property implies preservation of
  the Chern-Ricci form along such $G$-geodesics, under the extra assumption of
  complex anti-invariant initial metric variation and vanishing of the Nijenhuis tensor along the $G$-geodesic. 
  More in general we show that the main obstruction to the invariance of the Chern-Ricci form is the vanishing of the Nijenhuis tensor.
  This result is useful for
  a slice type theorem needed for the proof of the dynamical stability of the
  Soliton-K\"ahler-Ricci flow.
\end{abstract}
{\def\thefootnote{\relax}\footnote{\hskip-0.6cm{\bf{Key words}} : Almost complex manifolds, Chern-Ricci form, Bakry-Emery-Ricci tensor.
\\
{\bf{AMS Classification}} : 32Q60, 32Q15.}} 
\section{Statement of the invariance result}

We consider the space \tmtexttt{$\mathcal{M}$} of smooth Riemannian metrics
over a compact oriented manifold $X$ of dimension $m$. We denote by $\mathcal{V}_1$ the space
of positive smooth volume forms with integral one. Notice that the tangent
space of $\mathcal{M} \times \mathcal{V}_1$ is 
$$
T_{\mathcal{M} \times
\mathcal{V}_1} = C^{\infty} (X, S^2 T^{\ast}_X) \oplus C^{\infty} (X,
\Lambda^m T^{\ast}_X)_0,
$$
where
 $ C^{\infty} (X, \Lambda^m T^{\ast}_X)_0  \assign  \left\{ V \in C^{\infty}
  (X, \Lambda^m T^{\ast}_X) \mid \int_X V = 0 \right\} $.
We denote by $\tmop{End}_g \left( T_X \right)$ the bundle of $g$-symmetric
endomorphisms of $T_X$ and by $C_{\Omega}^{\infty} (X, \mathbbm{R})_0$ the
space of smooth functions with zero integral with respect to $\Omega$.
We will use the fact that for any $\left( g, \Omega \right) \in \mathcal{M}
\times \mathcal{V}_1$ the tangent space $T_{\mathcal{M} \times \mathcal{V}_1,
\left( g, \Omega \right)}$ identifies with $C^{\infty} (X, \tmop{End}_g \left(
T_X \right)) \oplus C_{\Omega}^{\infty} (X, \mathbbm{R})_0$ via the
isomorphism
\begin{eqnarray*}
  \left( v, V \right) & \longmapsto & \left( v^{\ast}_g, V_{\Omega}^{\ast}
  \right) \assign \left( g^{- 1} v, V / \Omega \right) .
\end{eqnarray*}
In \cite{Pal6}, we consider the pseudo-Riemannian metric $G$ over $\mathcal{M} \times
\mathcal{V}_1$, defined over any point $\left( g, \Omega \right) \in
\mathcal{M} \times \mathcal{V}_1$ by the formula
\begin{eqnarray*}
  G_{g, \Omega}  (u, U ; v, V) & = & \int_X \left[ \left\langle u, v
  \right\rangle_g - 2 U^{\ast}_{\Omega} V^{\ast}_{\Omega} \right] \Omega,
\end{eqnarray*}
for all $(u, U), (v, V) \in T_{\mathcal{M} \times \mathcal{V}_1}$. The
gradient flow of Perelman's $\mathcal{W}$-functional \cite{Per} with respect to the
structure $G$ is a modification of the Ricci flow with relevant properties
(see \cite{Pal6, Pal7}). The $G$-geodesics exists only for short time intervals
$\left( - \varepsilon, \varepsilon \right)$. This is because the $G$-geodesics
are uniquely determined by the evolution of the volume forms and the latter
degenerate in finite time (see section \ref{Pur-evolV}). In \cite{Pal6}, we show that the
space $G$-orthogonal to the tangent of the orbit of a point $\left( g, \Omega
\right) \in \mathcal{M} \times \mathcal{V}_1$, under the action of the
identity component of the diffeomorphism group is
\begin{eqnarray*}
  \mathbbm{F}_{g, \Omega} & \assign & \left\{ (v, V) \in T_{\mathcal{M} \times
  \mathcal{V}_1} \mid \nabla_g^{\ast_{\Omega}} v_g^{\ast} + \nabla_g
  V^{\ast}_{\Omega} = 0 \right\} ,
\end{eqnarray*}
where $\nabla_g^{\ast_{\Omega}}$ denotes the adjoint of the Levi-Civita connection with respect to the volume form $\Omega$.
In this paper we show the following conservative property.

\begin{proposition}
  \label{F-conserv} Let $\left( g_t, \Omega_t \right)_{t \in \left( -
  \varepsilon, \varepsilon \right)} \subset \mathcal{M} \times \mathcal{V}_1$
  be a $G$-geodesic such that \\$( \dot{g}_0, \dot{\Omega}_0) \in
  \mathbbm{F}_{g_0, \Omega_0}$. Then $( \dot{g}_t, \dot{\Omega}_t)
  \in \mathbbm{F}_{g_t, \Omega_t}$ for all $t \in \left( - \varepsilon,
  \varepsilon \right)$.
\end{proposition}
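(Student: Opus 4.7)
My strategy is to recognise $\mathbbm{F}_{g,\Omega}$ as the $G$-orthogonal complement of the tangent to the diffeomorphism orbit through $(g,\Omega)$, and to observe that $G$ itself is invariant under the pullback action of $\mathrm{Diff}(X)$ on $\mathcal{M}\times\mathcal{V}_1$. The latter makes the infinitesimal generators of the orbit into Killing vector fields for $G$, and Killing vector fields yield Noether-type conservation laws along geodesics. Propagating the initial orthogonality through such a conservation law is what should give the statement.

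\textbf{Invariance and Killing fields.} The invariance $G_{\phi^\ast g,\phi^\ast\Omega}(\phi^\ast u,\phi^\ast U;\phi^\ast v,\phi^\ast V)=G_{g,\Omega}(u,U;v,V)$ for $\phi\in\mathrm{Diff}(X)$ is a routine consequence of the naturality of $\langle\cdot,\cdot\rangle_g$ and of the ratio $V/\Omega$, combined with the change-of-variables formula. For each $\xi\in C^\infty(X,T_X)$ let $\phi^\xi_s$ denote its flow; then $K_\xi(g,\Omega):=(\mathcal{L}_\xi g,\mathcal{L}_\xi\Omega)$ is the $s=0$ generator of $s\mapsto(\phi^\xi_s)^\ast(g,\Omega)$, hence a Killing field for $G$. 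The tangent to the orbit through $(g_t,\Omega_t)$ is exactly $\{K_\xi(g_t,\Omega_t):\xi\in C^\infty(X,T_X)\}$, whose $G_{g_t,\Omega_t}$-orthogonal complement is, by definition recalled from \cite{Pal6}, the space $\mathbbm{F}_{g_t,\Omega_t}$.

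\textbf{Conservation and conclusion.} For any Killing field $K$ and any geodesic $\gamma$ on a pseudo-Riemannian manifold one has $\frac{d}{dt}G(K(\gamma),\dot\gamma)=0$. I would implement this without appealing to the Levi-Civita connection of $G$: the two-parameter map $\Phi(s,t):=(\phi^\xi_s)^\ast(g_t,\Omega_t)$ gives, for each fixed $s$, a $G$-geodesic in $t$ (the $\phi^\xi_s$-pullback of the given one), and the standard first-variation argument applied to this family of geodesics yields constancy in $t$ of $G_{g_t,\Omega_t}(K_\xi(g_t,\Omega_t),(\dot g_t,\dot\Omega_t))$. The hypothesis $(\dot g_0,\dot\Omega_0)\in\mathbbm{F}_{g_0,\Omega_0}$ says exactly that this quantity vanishes at $t=0$ for every $\xi$; conservation then forces vanishing for all $t$, which by the previous paragraph is the claim $(\dot g_t,\dot\Omega_t)\in\mathbbm{F}_{g_t,\Omega_t}$.

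\textbf{Expected main obstacle.} The delicate point is the rigorous first-variation argument in the infinite-dimensional weak pseudo-Riemannian setting, where the existence of a Levi-Civita connection for $G$ and the smooth dependence of $G$-geodesics on pullbacks need justification. A concrete alternative, which may well be the route the paper actually takes, is to bypass the Killing framework and differentiate the defining PDE $\nabla_{g_t}^{\ast_{\Omega_t}}(\dot g_t)_{g_t}^{\ast}+\nabla_{g_t}(\dot\Omega_t)^{\ast}_{\Omega_t}=0$ directly in $t$, substituting the explicit $G$-geodesic equations from \cite{Pal6} for $(\ddot g_t,\ddot\Omega_t)$ and for the time derivatives of the Christoffel symbols of $\nabla_{g_t}$ and of the codifferential $\nabla^{\ast_{\Omega_t}}$. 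The Killing/Noether viewpoint guarantees that everything must cancel, but organising this cancellation explicitly—in particular, the interaction between the variation of $\nabla^{\ast_\Omega}$ and the quadratic terms in $\dot g_t$ coming from the geodesic equation—is where I would expect the main technical work to lie.
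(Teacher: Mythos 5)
Your plan does not yet contain a proof: both of your routes stop exactly where the work begins. For the primary (Noether/Killing) route, the statement you need --- that $G_{g_t,\Omega_t}\bigl((\mathcal{L}_\xi g_t,\mathcal{L}_\xi\Omega_t),(\dot g_t,\dot\Omega_t)\bigr)$ is constant in $t$ along a $G$-geodesic --- is imported from finite-dimensional pseudo-Riemannian geometry, where it rests on the symmetry lemma and metric compatibility of the Levi-Civita connection (equivalently on the first-variation formula for the energy). Here $G$ is a weak pseudo-metric on a Fr\'echet manifold, and the availability of precisely that covariant calculus (or of a variational characterization of the $G$-geodesics of the explicit system $(S)$) is the point you yourself flag as unresolved; asserting that ``everything must cancel'' is not a substitute for verifying it, and verifying it directly amounts to a computation of the same size as the one you are trying to avoid. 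Your ``concrete alternative'' is indeed the paper's actual method, but you only name it: the interaction you worry about (variation of $\nabla^{\ast_\Omega}$ against the quadratic terms of the geodesic equation) is exactly the content of the proof, and it is governed by a specific input you do not have, namely the first-variation formula for the map $(g,\Omega)\mapsto\nabla_g^{\ast_\Omega}$ (formula 19 of [Pal7]), quoted in the paper as
\begin{equation*}
2\left[\left(D_{g,\Omega}\nabla_{\bullet}^{\ast_{\bullet}}\right)(v,V)\right]v^{\ast}_g=\frac{1}{2}\nabla_g|v|^2_g-2\,v^{\ast}_g\cdot\left(\nabla_g^{\ast_{\Omega}}v^{\ast}_g+\nabla_g V^{\ast}_{\Omega}\right).
\end{equation*}
With this formula, setting $A_t:=\nabla_{g_t}^{\ast_{\Omega_t}}\dot g^{\ast}_t+\nabla_{g_t}\dot\Omega^{\ast}_t$ and substituting the geodesic equations, the gradient terms $\tfrac12\nabla_{g_t}|\dot g_t|^2$ cancel (the scalar $G_t$ being spatially constant) and one obtains the closed linear pointwise ODE $2\frac{d}{dt}A_t=-2\bigl(\dot g^{\ast}_t+\dot\Omega^{\ast}_t\mathbbm{I}\bigr)\cdot A_t$, whence $A_t\equiv0$ by Cauchy uniqueness since $A_0=0$.

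Note also what the paper's route buys that your Noether route would not, even if made rigorous: the argument is purely local and pointwise on $X$ (a linear ODE for the tensor $A_t$ itself), needs no global variational or orbit-orthogonality machinery beyond the definition of $\mathbbm{F}_{g,\Omega}$, and sidesteps entirely the question of a Levi-Civita connection for $G$. If you want to salvage your first route, you would have to either derive the first-variation formula for the energy of $G$ from scratch in this setting, or verify the conservation law by differentiating the pairing directly using $(S)$ --- and that computation is essentially the one above in integrated form. As it stands, the proposal identifies the correct heuristic and even guesses the paper's strategy, but the decisive cancellation is neither exhibited nor reducible to cited results, so the proof is incomplete.
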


We consider now a compact symplectic manifold $\left( X, \omega \right)$ and
we denote by $\mathcal{J}_{\omega}^{\tmop{ac}}$ the space of smooth almost
complex structures compatible with the symplectic form $\omega$. We notice
that the variations inside the space of metrics
$\mathcal{M}^{\tmop{ac}}_{\omega} \assign - \omega \cdot
\mathcal{J}^{\tmop{ac}}_{\omega} \subset \mathcal{M}$, at a point $g = -
\omega J$, are $J$-anti-invariant. Thus, in this set-up, it is natural to
consider the sub-space
\begin{eqnarray*}
  \mathbbm{F}^J_{g, \Omega} & \assign & \left\{ \left( v, V \right) \in
  \mathbbm{F}_{g, \Omega} \mid v = - J^{\ast} v J_{_{_{_{}}}} \right\} .
\end{eqnarray*}
With these notations we state the following result.

\begin{theorem}
  \label{Main}{\tmstrong{$($Main result. The invariance of the Chern-Ricci
  form$)$.}}
  Let $\left( X, J_0, g_0 \right)$ be a compact almost-K\"ahler manifold with
  symplectic form \\$\omega \assign g_0 J_0$. Then for any
  $G$-geodesic $( g_t, \Omega_t)_{t \in \left( - \varepsilon,
  \varepsilon \right)} \subset \mathcal{M} \times \mathcal{V}_1$, with initial
  speed $( \dot{g}_0, \dot{\Omega}_0) \in \mathbbm{F}^{J_0}_{g_0,
  \Omega_0}$ holds the properties $J_t \assign - \omega^{- 1} g_t \in
  \mathcal{J}^{\tmop{ac}}_{\omega}$, $(\dot{g}_t, \dot{\Omega}_t)
  \in \mathbbm{F}^{J_t}_{g_t, \Omega_t}$ and the variation formulas
 \begin{eqnarray*}
    \frac{d}{d t} \tmop{Ric}_{J_t} (\Omega_t) & = & - \, d \tmop{Tr}_{g_t}\left[\omega(\bullet
    \neg \,N_{_{J_t}})\dot{g}_t^{\ast}\right] ,
  \end{eqnarray*}
 \begin{eqnarray*}
    2 \frac{d}{d t} \tmop{Ric}_{J_t} (\Omega_t) & = & d \tmop{Tr}_{g_t}\left[\omega(\bullet
    \neg \,\overline{\partial}_{T_{X, J_t}} \dot{g}_t^{\ast})\right] .
  \end{eqnarray*}  
In particular if the Nijenhuis tensor vanishes identically along the $G$-geodesic, then  
  $\tmop{Ric}_{_{J_t}}
  \left( \Omega_t \right)= \tmop{Ric}_{_{J_0}} \left( \Omega_0 \right)$, for all $t \in \left( - \varepsilon, \varepsilon
  \right)$.
\end{theorem}

Our unique interest in this result concerns the Fano case $\omega =\tmop{Ric}_{_{J_0}} \left( \Omega_0 \right)$. In
this case the space of $\omega$-compatible complex (integrable) structures
$\mathcal{J}_{\omega}$ embeds naturally inside $\mathcal{M} \times
\mathcal{V}_1$ via the Chern-Ricci form. (This is possible thanks to the
$\partial \bar{\partial}$-lemma). The image of this embedding is
\begin{eqnarray*}
  \mathcal{S}_{\omega} & \assign & \left\{ (g, \Omega) \in
  \mathcal{M}_{\omega} \times \mathcal{V}_1 \mid \omega = \tmop{Ric}_J
  (\Omega), J = - \omega^{- 1} g \right\},
\end{eqnarray*}
with $\mathcal{M}_{\omega} \assign - \omega \cdot \mathcal{J}_{\omega} \subset
\mathcal{M}$. It is well-known that the $J$-anti-linear endomorphism sections
associated to the metric-variations in $\mathcal{M}_{\omega}$ at a point $g =
- \omega J$, are $\overline{\partial}_{T_{X, J}}^{}$-closed. Thus, in the
integrable set-up, it is natural to consider the sub-space
\begin{eqnarray*}
  \mathbbm{F}^J_{g, \Omega} [0] & \assign & \left\{ \left( v, V \right) \in
  \mathbbm{F}^J_{g, \Omega} \mid \overline{\partial}_{T_{X, J}}^{} v_g^{\ast}
  = 0_{_{_{_{}}}} \right\} .
\end{eqnarray*}
It has been showed in \cite{Pal6} that this is the space $G$-orthogonal to the
tangent to the orbit of the point $\left( g, \Omega \right) \in
\mathcal{S}_{\omega}$, under the action of the identity component of the
$\omega$-symplectomorphisms group. (See the identity 1.14 in \cite{Pal6}). Furthermore
the product $G_{g, \Omega}$ is positive over $\mathbbm{F}^J_{g, \Omega} [0]$,
thanks to a result in \cite{Pal6}. We conjecture the following slice type result. 

\begin{Conjecture}
  Let $\left( X, J \right)$ be a Fano manifold and let $\omega \in 2 \pi c_1 \left( X \right)$ be a K\"ahler
  form. Then the distribution $\left( g, \Omega \right) \in
  \mathcal{S}_{\omega} \longmapsto \mathbbm{F}^J_{g, \Omega} \left[ 0
  \right]$, with $J \assign - \omega^{- 1} g$ is integrable over the space
  $\mathcal{S}_{\omega}$, with leave at the
  point $\left( g, \Omega \right)$ given locally, in a neighborhood of this point, by $\Sigma^{\omega}_{g, \Omega}
  \assign \{ \left( \gamma, \mu \right) \in \tmop{Exp}_G \left(
  \mathbbm{F}_{g, \Omega}^J \right) \mid \nabla_{\gamma} \omega = 0\}$. 
\end{Conjecture}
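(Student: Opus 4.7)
The plan is to produce the leaves explicitly via the $G$-exponential map and then verify, in sequence, the three properties required for a Frobenius-type statement: (i) the candidate leaf $\Sigma^{\omega}_{g,\Omega}$ sits inside $\mathcal{S}_{\omega}$; (ii) it is a smooth submanifold whose tangent space at the base point equals $\mathbbm{F}^J_{g,\Omega}[0]$; and (iii) at every other point $(\gamma,\mu)$ of the leaf its tangent space coincides with $\mathbbm{F}^{J_{\gamma}}_{\gamma,\mu}[0]$, where $J_{\gamma} \assign -\omega^{-1}\gamma$.

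For (i) I would invoke Theorem \ref{Main}. Since any initial velocity $(\dot{g}_0,\dot{\Omega}_0)\in\mathbbm{F}^{J_0}_{g_0,\Omega_0}$ produces a $G$-geodesic along which $J_t\assign-\omega^{-1} g_t$ is $\omega$-compatible and $\tmop{Ric}_{J_t}(\Omega_t)=\rho$, the Fano identification $\omega=\rho$ places the entire geodesic inside $\mathcal{M}^{\tmop{ac}}_{\omega}\times\mathcal{V}_1$ with Chern-Ricci form prescribed to be $\omega$. Intersecting with $\{\nabla_{\gamma}\omega=0\}$ then selects exactly those $(\gamma,\mu)$ for which $\omega$ is parallel, equivalently those for which the $\omega$-compatible $J_{\gamma}$ is integrable --- which is the extra condition that upgrades $\mathcal{M}^{\tmop{ac}}_{\omega}$ to $\mathcal{M}_{\omega}$, and so places $\Sigma^{\omega}_{g,\Omega}$ inside $\mathcal{S}_{\omega}$.

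For (ii) I expect the restriction of $\tmop{Exp}_G$ to a neighbourhood of $0$ in $\mathbbm{F}^J_{g,\Omega}$ to be a local diffeomorphism onto its image, by a standard implicit-function argument once the $G$-geodesic ODE is cast in normal form. The additional condition $\nabla_{\gamma}\omega=0$ cuts out, at first order in the initial data, precisely the $[0]$-sub-space: one checks that the linearisation of $\nabla_{\gamma}\omega$ along a $J$-anti-invariant variation $v$ vanishes iff $v^{\ast}_g$ is $\overline{\partial}_{T_{X,J}}$-closed. The Fano and unobstructedness hypotheses then enter, via Hodge theory and the $\partial\overline{\partial}$-lemma, to promote this first-order computation to a genuine smooth submanifold of the expected dimension.

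The principal obstacle, as I see it, is (iii): showing that at each point $(\gamma,\mu)=\tmop{Exp}_G(t(v,V))$ of the leaf the full $[0]$ condition still holds for the instantaneous velocity $(\dot{g}_t,\dot{\Omega}_t)$. Proposition \ref{F-conserv} supplies half automatically --- the velocity stays in $\mathbbm{F}^{J_t}_{g_t,\Omega_t}$ --- so what remains is the propagation of $\overline{\partial}_{T_{X,J_t}}$-closedness of $(\dot{g}_t)^{\ast}_{g_t}$. My attempt would be to derive, from the $G$-geodesic equation combined with the evolution of $J_t$, a linear evolution equation for $\overline{\partial}_{T_{X,J_t}}(\dot{g}_t)^{\ast}_{g_t}$ admitting the zero solution given zero initial data. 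Complex-deformation unobstructedness should enter here in a structural way, ruling out the $H^2$-obstructions that could otherwise destroy $\overline{\partial}$-closedness along the flow. Once this propagation is established, Frobenius integrability follows automatically from the existence of an integral submanifold through each point whose tangent space realises the distribution.
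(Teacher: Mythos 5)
You should be aware that the statement you are attempting is stated in the paper as a \emph{Conjecture}: the paper offers no proof of it, and indeed says that its resolution is what is still needed (for the stability argument in \cite{Pal8}). So your proposal cannot be measured against a proof in the paper; it has to stand on its own, and as it stands it does not. The decisive gap is exactly the one you flag as ``the principal obstacle'', namely point (iii): the propagation of $\overline{\partial}_{T_{X,J_t}}$-closedness of $\dot{g}_t^{\ast}$ along a $G$-geodesic. Nothing in the paper's results gives this: Proposition \ref{F-conserv} propagates only the divergence-type condition defining $\mathbbm{F}_{g,\Omega}$, Lemma \ref{acx-geodesic} only the $J$-anti-invariance, and Theorem \ref{Main} only the Chern--Ricci form. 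You do not derive the ``linear evolution equation for $\overline{\partial}_{T_{X,J_t}}(\dot{g}_t)^{\ast}_{g_t}$'' that your plan requires, and it is not at all clear one exists with zero right-hand side: differentiating $\overline{\partial}_{T_{X,J_t}} \dot{g}_t^{\ast}$ in $t$ produces terms involving $\dot{J}_t$ coupled to first derivatives of $\dot{g}_t^{\ast}$, and the first geodesic equation $\frac{d}{dt}\dot{g}_t^{\ast} = -\dot{\Omega}_t^{\ast}\dot{g}_t^{\ast}$ multiplies by the non-constant function $\dot{\Omega}_t^{\ast}$, whose gradient enters through $\overline{\partial}_{T_{X,J_t}}$; there is no visible mechanism cancelling these terms, and invoking ``unobstructedness ruling out $H^2$-obstructions'' is a hope, not an argument.

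The other two steps are also substantially weaker than you present them. In (ii), the ambient space $\mathcal{M}\times\mathcal{V}_1$ is a Fr\'echet manifold, $\tmop{Exp}_G$ exists only for short time and only because the geodesic reduces to the scalar ODE for $u_t$ of section \ref{Pur-evolV}; a ``standard implicit-function argument'' is not available in this category without tame estimates or a Nash--Moser scheme, and the claim that the linearisation of $\nabla_{\gamma}\omega=0$ along $J$-anti-invariant variations cuts out exactly the $\overline{\partial}_{T_{X,J}}$-closed ones is asserted, not checked (it is essentially the well-known first-order description of $\mathcal{J}_{\omega}\subset\mathcal{J}_{\omega}^{\tmop{ac}}$, but you must still show the cut locus is a smooth submanifold, which is where unobstructedness and the $\partial\overline{\partial}$-lemma would genuinely have to work, again outside the reach of a finite-dimensional implicit function theorem). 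Finally, even granting (i)--(iii), Frobenius integrability does not ``follow automatically'': in this infinite-dimensional setting the existence of candidate integral submanifolds through each point with the right tangent spaces is the content one must establish directly, together with the statement that nearby leaves fit together as a foliation, and your proposal does not address the latter at all. In short, (i) is correct and is indeed a consequence of Theorem \ref{Main} plus the characterisation of integrability by $\nabla_{\gamma}\omega=0$, but (ii) and especially (iii) are open, which is consistent with the statement being left as a conjecture in the paper.
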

We invite the readers to compare with \cite{F-S} for other approaches concerning slice type problems in the space of compatible complex structures.
In view of the results in section 9 of \cite{Pal6}, the solution of this
conjecture is crucial for the proof of the dynamical stability of the
Soliton-K\"ahler-Ricci flow \cite{Pal9}. 
\\
An important ingredient for the proof of the main theorem \ref{Main} is the general variation formula \ref{Acvr-O-Rc-fm}. 
Particular cases of this variation formula have been intensively studied. See \cite{Fu, Do, Mo, Ga, Pal5}. 
These formulas allow to establish an 
important moment map picture in K\"{a}hler-geometry. See \cite{Fu} for the integrable case and \cite{Do} for the almost complex case.
In the last section we provide a formula relating the Bakry-Emery-Ricci tensor with the Chern-Ricci form.

\section{Pure evolving volume nature of the $G$-geodesic equation}\label{Pur-evolV}

We remind that the equation of a $G$-geodesic $\left( g_t, \Omega_t \right)_{t
\in \left( - \varepsilon, \varepsilon \right)}$, (see \cite{Pal6}), rewrites under the
form
\begin{eqnarray*}
\left( S \right)  \left\{ \begin{array}{l}
     \frac{d}{d t}  \dot{g}_t^{\ast} + \dot{\Omega}^{\ast}_t  \dot{g}_t^{\ast}
      = 0 ,\\
     \\
     \ddot{\Omega}_t + \frac{1}{4}  \left\{ \left| \dot{g}_t \right|^2_{g_t} -
     2 ( \dot{\Omega}^{\ast}_t)^2 - \int_X \left[ \left| \dot{g}_t
     \right|^2_{g_t} - 2 ( \dot{\Omega}_t^{\ast})^2 \right] \Omega_t \right\}
     \Omega_t   =  0  .
   \end{array} \right. 
\end{eqnarray*}   
The invariance of the scalar product of the speed of geodesics implies
\begin{eqnarray*}
  G_t \assign G_{g_t, \Omega_t} ( \dot{g}_t, \dot{\Omega}_t ; \dot{g}_t,
  \dot{\Omega}_t )  \equiv  G_{g_0, \Omega_0} ( \dot{g}_0,
  \dot{\Omega}_0 ; \dot{g}_0, \dot{\Omega}_0 ) .
\end{eqnarray*}
Therefore a solution of the system ($S$) satisfies also
\[ \left( S_1 \right)  \left\{ \begin{array}{l}
     \frac{d}{d t}  \dot{g}_t^{\ast} + \dot{\Omega}^{\ast}_t  \dot{g}_t^{\ast}
     = 0 ,\\
     \\
     \ddot{\Omega}_t + \frac{1}{4}  \left[ \left| \dot{g}_t \right|^2_{g_t} -
     2 ( \dot{\Omega}^{\ast}_t)^2 - G_0 \right] \Omega_t  =
      0  .
   \end{array} \right. \]
The first equation in the system ($S_1$) rewrites as
\begin{eqnarray*}
  \dot{g}_t^{\ast} & = & \frac{\Omega_0}{\Omega_t}  \dot{g}_0^{\ast},
\end{eqnarray*}
which provides the expression
\begin{eqnarray*}
  g_t & = & g_0 \exp \left( \dot{g}_0^{\ast} \int_0^t
  \frac{\Omega_0}{\Omega_s} d s \right) .
\end{eqnarray*}
We set $u_t \assign \Omega_t / \Omega_0$, and we observe the trivial
identities
\begin{eqnarray*}
  \left| \dot{g}_t \right|^2_{g_t} & = & \tmop{Tr}_{_{\mathbbm{R}}} \left(
  \dot{g}_t^{\ast} \right)^2 = u^{- 2}_t \left| \dot{g}_0 \right|^2_{g_0},\\
  &  & \\
  \dot{\Omega}_t^{\ast} & = & \dot{u}_t / u_t .
\end{eqnarray*}
We deduce that the system ($S_1$) is equivalent to the system
\[  \left\{ \begin{array}{l}
     g_t   = g_0 \exp \left( \dot{g}_0^{\ast} \int_0^t u^{-
     1}_s d s \right),\\
     \\
     \Omega_t   = u_t \Omega_0,\\
     \\
     4 \ddot{u}_t + \frac{\left| \dot{g}_0 \right|^2_{g_0} - 2
     \dot{u}_t^2}{u_t} - u_t G_0  =  0
     ,\\
     \\
     u_0   = 1,\\
     \\
     \int_X \dot{u}_0 \Omega_0 = 0 .
   \end{array} \right. \]
The solution $u$ is given by the explicit formula
\begin{eqnarray*}
  u_t & = & 1 + \dot{u}_0 \sum_{k \geqslant 0}  \frac{\left( G_0 / 2
  \right)^k}{\left( 2 k + 1 \right) !} t^{2 k + 1} - \frac{1}{4} 
  \underline{N_0}  \sum_{k \geqslant 1} \frac{\left( G_0 / 2 \right)^{k -
  1}}{\left( 2 k \right) !} t^{2 k},\\
  &  & \\
  \underline{N_0} & : = & N_0 - G_0,\\
  &  & \\
  N_0 & \assign & \left| \dot{g}_0 \right|^2_{g_0} - 2 (
  \dot{\Omega}^{\ast}_0)^2 .
\end{eqnarray*}
Thus the solution $\left( g_t, \Omega_t \right)_{t \in \left( - \varepsilon,
\varepsilon \right)}$ of the system ($S_1$) satisfies $\int_X \Omega_t \equiv
1$. This implies $G_t \equiv G_0$. We infer that the system ($S_1$) is
equivalent to the system ($S$).

In the case $G_0 > 0$, the previous formula for $u_t$ reduces to the
expression
\begin{eqnarray*}
  u_t  =  1 + \dot{\Omega}^{\ast}_0 \gamma^{- 1}_0 \sinh \left( \gamma_0 t
  \right) - \underline{N_0}  \left( 2 \gamma_0 \right)^{- 2}  [\cosh \left(
  \gamma_0 t \right) - 1],
\end{eqnarray*}
with $\gamma_0 : = \left( G_0 / 2 \right)^{1 / 2}$.

\section{Conservative properties along $G$-geodesics}

In this section we show proposition \ref{F-conserv}.

\begin{proof}
  We remind first the fundamental variation formula
  \begin{eqnarray}
    2 \left[ \left( D_{g, \Omega} \nabla_{\bullet}^{\ast_{\bullet}} \right)
    \left( v, V \right)_{_{_{_{}}}} \right] v^{\ast}_g  =  \frac{1}{2}
    \nabla_g |v|^2_g - 2 v^{\ast}_g \cdot \left( \nabla_g^{\ast_{\Omega}}
    v^{\ast}_g + \nabla_g V^{\ast}_{\Omega} \right), \label{super-var-Div} 
  \end{eqnarray}
  obtained in \cite{Pal8}, (see the formula 19 in \cite{Pal8}). Using (\ref{super-var-Div}) we develop the
  derivative
  \begin{eqnarray*}
    2 \frac{d}{d t}  \left( \nabla_{g_t}^{\ast_{\Omega_t}}  \dot{g}^{\ast}_t +
    \nabla_{g_t}  \dot{\Omega}^{\ast}_t \right) & = & - 2 \dot{g}^{\ast}_t
    \cdot \left( \nabla_{g_t}^{\ast_{\Omega_t}}  \dot{g}^{\ast}_t +
    \nabla_{g_t}  \dot{\Omega}^{\ast}_t \right) + \frac{1}{2} \nabla_{g_t} |
    \dot{g}_t |_{g_t}^2\\
    &  & \\
    & + & 2 \nabla_{g_t}^{\ast_{\Omega_t}} \frac{d}{d t}  \dot{g}_t^{\ast}
    \noplus + 2 \nabla_{g_t} \frac{d}{d t}  \dot{\Omega}_t^{\ast} - 2
    \dot{g}^{\ast}_t \cdot \nabla_{g_t}  \dot{\Omega}^{\ast}_t .
  \end{eqnarray*}
  Writing the equations defining the $G$-geodesic $\left( g_t, \Omega_t
  \right)_{t \in \left( - \varepsilon, \varepsilon \right)}$, under the form
  \[ \left\{ \begin{array}{l}
       \frac{d}{d t}  \dot{g}_t^{\ast} + \dot{\Omega}^{\ast}_t 
       \dot{g}_t^{\ast}   = 0 ,\\
       \\
       2 \frac{d}{d t}  \dot{\Omega}_t^{\ast} + ( \dot{\Omega}^{\ast}_t)^2 +
       \frac{1}{2}  \left| \dot{g}_t \right|^2_{g_t} - \frac{1}{2} G_{g_t,
       \Omega_t} ( \dot{g}_t, \dot{\Omega}_t ; \dot{g}_t, \dot{\Omega}_t)
        =  0,
     \end{array} \right. \]
  we infer
  \begin{eqnarray*}
    2 \frac{d}{d t}  \left( \nabla_{g_t}^{\ast_{\Omega_t}}  \dot{g}^{\ast}_t +
    \nabla_{g_t}  \dot{\Omega}^{\ast}_t \right) & = & - 2 \dot{g}^{\ast}_t
    \cdot \left( \nabla_{g_t}^{\ast_{\Omega_t}}  \dot{g}^{\ast}_t +
    \nabla_{g_t}  \dot{\Omega}^{\ast}_t \right)\\
    &  & \\
    & - & 2 \nabla_{g_t}^{\ast_{\Omega_t}} \left( \dot{\Omega}^{\ast}_t 
    \dot{g}_t^{\ast} \right) \noplus - \nabla_{g_t} \left(
    \dot{\Omega}^{\ast}_t \right)^2 - 2 \dot{g}^{\ast}_t \cdot \nabla_{g_t} 
    \dot{\Omega}^{\ast}_t,
  \end{eqnarray*}
  and thus
  \begin{eqnarray*}
    2 \frac{d}{d t}  \left( \nabla_{g_t}^{\ast_{\Omega_t}}  \dot{g}^{\ast}_t +
    \nabla_{g_t}  \dot{\Omega}^{\ast}_t \right)  =  - 2 \left(
    \dot{g}^{\ast}_t + \dot{\Omega}^{\ast}_t \mathbbm{I} \right) \cdot \left(
    \nabla_{g_t}^{\ast_{\Omega_t}}  \dot{g}^{\ast}_t + \nabla_{g_t} 
    \dot{\Omega}^{\ast}_t \right) .
  \end{eqnarray*}
  Then the conclusion follows by Cauchy's uniqueness.
\end{proof}

Let $\mathcal{J} \subset C^{\infty} (X, \tmop{End}_{\mathbbm{R}} (T_X))$ be
the set of smooth almost complex structures over $X$. For any non degenerate closed
$2$-form $\omega$ over a symplectic manifold, we define the space
$\mathcal{J}^{\tmop{ac}}_{\omega}$ of $\omega$-compatible almost complex
structures as
\begin{eqnarray*}
  \mathcal{J}^{\tmop{ac}}_{\omega} & \assign & \{J \in \mathcal{J} \mid - \;
  \omega J \in \mathcal{M}\} .
\end{eqnarray*}
With these notations holds the following result.

\begin{lemma}
  \label{acx-geodesic}Let $J_0 \in \mathcal{J}^{\tmop{ac}}_{\omega}$ and let
  $\left( g_t, \Omega_t \right)_{t \in \left( - \varepsilon, \varepsilon
  \right)} \subset \mathcal{M} \times \mathcal{V}_1$ be a $G$-geodesic such
  that $g_0 = - \omega J_0$ and $\dot{g}^{\ast}_0 J_0 = - J_0 
  \dot{g}^{\ast}_0$. Then $J_t \assign - \omega^{- 1} g_t \in
  \mathcal{J}^{\tmop{ac}}_{\omega}$, for all $t \in \left( - \varepsilon,
  \varepsilon \right)$.
\end{lemma}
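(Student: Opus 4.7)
\textbf{Proof plan for Lemma \ref{acx-geodesic}.} The plan is to exploit the explicit formula for $g_t$ derived in section \ref{Pur-evolV} and reduce the statement to a purely algebraic computation about the exponential of an endomorphism that anti-commutes with $J_0$. Since the hypothesis $g_0 = -\omega J_0$ gives $J_0 = -\omega^{-1} g_0$, I start from
\[
g_t = g_0 \exp\!\left( \dot{g}_0^{\ast} \int_0^t u_s^{-1} \, ds \right),
\]
and set $A_t := \dot{g}_0^{\ast} \int_0^t u_s^{-1} \, ds$, which is an endomorphism of $T_X$ at each point of $X$. Then
\[
J_t = -\omega^{-1} g_t = (-\omega^{-1} g_0) \exp(A_t) = J_0 \exp(A_t).
\]

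Next I would verify the key algebraic identity $A_t J_0 = - J_0 A_t$. Since $\int_0^t u_s^{-1}\,ds$ is a scalar function on $X$ (a real-valued factor at each point), it commutes with every endomorphism of $T_X$; therefore the assumption $\dot{g}_0^{\ast} J_0 = - J_0 \dot{g}_0^{\ast}$ upgrades pointwise to $A_t J_0 = - J_0 A_t$. A straightforward power-series manipulation then gives $\exp(A_t) J_0 = J_0 \exp(-A_t)$, whence
\[
J_t^2 = J_0 \exp(A_t) \, J_0 \exp(A_t) = J_0^2 \exp(-A_t) \exp(A_t) = J_0^2 = -\mathbbm{I}.
\]
This shows $J_t \in \mathcal{J}$ for all $t \in (-\varepsilon, \varepsilon)$.

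It remains to verify the compatibility condition $-\omega J_t \in \mathcal{M}$. But by construction $-\omega J_t = -\omega(-\omega^{-1} g_t) = g_t$, and $g_t \in \mathcal{M}$ since $(g_t, \Omega_t)$ is a $G$-geodesic in $\mathcal{M} \times \mathcal{V}_1$. Therefore $J_t \in \mathcal{J}^{\tmop{ac}}_{\omega}$, as required.

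The conceptual point---and the only place where care is needed---is the observation that the integral factor $\int_0^t u_s^{-1}\,ds$ is a scalar, so that the anti-commutation with $J_0$ passes from $\dot{g}_0^{\ast}$ to $A_t$ without change. There is no real obstacle here; the essential work is done by the explicit integration of the first equation of the geodesic system performed in section \ref{Pur-evolV}, and the present lemma is effectively a corollary of that formula combined with the elementary fact that conjugation by $J_0$ negates any endomorphism that anti-commutes with it.
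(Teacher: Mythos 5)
Your proposal is correct, and it takes a genuinely different route from the paper. The paper argues infinitesimally: using $\dot{J}_t = J_t \dot{g}_t^{\ast}$ and the first geodesic equation $\frac{d}{dt}\dot{g}_t^{\ast} + \dot{\Omega}_t^{\ast}\dot{g}_t^{\ast} = 0$, it shows that the anti-commutator $J_t\dot{g}_t^{\ast} + \dot{g}_t^{\ast}J_t$ satisfies a linear first-order ODE, hence vanishes identically by Cauchy uniqueness; it then deduces $2\dot{J}_t = [J_t,\dot{g}_t^{\ast}]$ and invokes Lemma 4 of \cite{Pal3} to conclude $J_t^2 = -\mathbbm{I}$. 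You integrate instead of differentiating: the closed-form solution $g_t = g_0\exp\bigl(\dot{g}_0^{\ast}\int_0^t u_s^{-1}ds\bigr)$ from section \ref{Pur-evolV} gives $J_t = J_0\exp(A_t)$ with $A_t$ a pointwise scalar multiple of $\dot{g}_0^{\ast}$, so $A_tJ_0 = -J_0A_t$ and $J_t^2 = -\mathbbm{I}$ follows from a two-line power-series manipulation, while the compatibility $-\omega J_t = g_t \in \mathcal{M}$ is automatic from the hypothesis that the geodesic lies in $\mathcal{M}\times\mathcal{V}_1$. Your version is more elementary and self-contained (no Cauchy uniqueness, no appeal to the external lemma), and it even hands you for free the anti-invariance $\dot{g}_t^{\ast}J_t = -J_t\dot{g}_t^{\ast}$ for all $t$ (since $\dot{g}_t^{\ast} = u_t^{-1}\dot{g}_0^{\ast}$ commutes with $A_t$), a fact the paper's proof establishes en route and which is what actually gets used in the proof of Theorem \ref{Main}; if you intend your lemma to feed into that theorem, it is worth recording this consequence explicitly. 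The paper's ODE method, by contrast, does not depend on the explicit integration of the geodesic system and so would survive in situations where no closed-form solution is available.
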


\begin{proof}
  Using the identity $\dot{J}_t = J_t  \dot{g}_t^{\ast}$ and the $G$-geodesic
  equation
  \begin{eqnarray*}
    \frac{d}{d t}  \dot{g}_t^{\ast} + \dot{\Omega}^{\ast}_t  \dot{g}^{\ast}_t
    & = & 0,
  \end{eqnarray*}
  we obtain the variation formula
  \begin{eqnarray*}
    \frac{d}{d t}  \left( J_t  \dot{g}_t^{\ast} + \dot{g}_t^{\ast} J_t \right)
    & = & \left( J_t  \dot{g}_t^{\ast} + \dot{g}_t^{\ast} J_t \right) \cdot
    \left( \dot{g}^{\ast}_t - \dot{\Omega}^{\ast}_t \mathbbm{I} \right) .
  \end{eqnarray*}
  This implies $\dot{g}^{\ast}_t J_t = - J_t  \dot{g}^{\ast}_t$, for all $t
  \in \left( - \varepsilon,
  \varepsilon \right)$, by Cauchy's uniqueness. We deduce in particular the
  evolution identity $2 \dot{J}_t = \left[ J_t, \dot{g}_t^{\ast} \right]$.
  Then $J^2_t = -\mathbbm{I}_{_{T_X}}$, thanks to lemma 4 in \cite{Pal3}. We
  infer the required conclusion.
\end{proof}

\section{The first variation of the $\Omega$-Chern-Ricci form}

Let $(X, J)$ be an almost complex manifold. Any volume form $\Omega > 0$
induces a hermitian metric $h_{\Omega}$ over the canonical bundle $K_{X, J}
\assign \Lambda_J^{n, 0} T^{\ast}_X$, which is given by the formula
\[ h_{\Omega} (\alpha, \beta) \assign \frac{n! i^{n^2} \alpha \wedge
   \overline{\beta} }{\Omega} . \]
We define the $\Omega$-Chern-Ricci form
\[ \tmop{Ric}_{_J} \left( \Omega \right) \assign - i\mathcal{C}_{h_{\Omega}}
   \left( K_{X, J} \right), \]
where $\mathcal{C}_h (F)$ denotes the Chern curvature of a hermitian vector
bundle $(F, \overline{\partial}_F, h)$, equipped with a $\left( 0, 1
\right)$-type connection. Consider also a $J$-invariant hermitian metric
$\omega$ over $X$. We remind that the $\omega$-Chern-Ricci form is defined by
the formula
\begin{eqnarray*}
  \tmop{Ric}_{_J} \left( \omega \right)  \assign  \tmop{Tr}_{_{\mathbbm{C}}}
  \left[ J\mathcal{C}_{\omega} \left( T_{X, J} \right) \right] .
\end{eqnarray*}
The fact that the metric $h_{\omega^n}$ over $K_{X, J}$ is induced by the
metric $\omega$ over $T_{X, J}$ implies, by natural functorial properties, the
identity $\tmop{Ric}_{_J} (\omega) = \tmop{Ric}_{_J} (\omega^n)$. Let now
$$
\mathcal{KS}  : = \Big\{ (J, g) \in
   \mathcal{J} \times \mathcal{M}  \mid 
   g = J^{\ast} gJ
   , \; \nabla_g J 
   =  0 \Big\} , 
$$
be the space of K\"ahler structures over a compact manifold $X$. We remind
that if $A \in \tmop{End}_{\mathbbm{R}} (T_X)$, then its transposed $A^T_g$
with respect to $g$ is given by $A^T_g = g^{- 1} A^{\ast} g$. We observe that
the compatibility condition $g = J^{\ast} gJ$, is equivalent to the condition
$J^T_g = - J$. We define also the space of almost K\"ahler structures as
\[ \mathcal{A} \mathcal{KS} : =  \Big\{ (J, g)
   \in \mathcal{J} \times \mathcal{M}  \mid 
    g  =  J^{\ast} gJ
   ,  d \left( g J \right)
   =  0 \Big\}. \]
With these notations hold the following first variation formula for the
$\Omega$-Chern-Ricci form. (Compare with \cite{Fu, Do, Mo, Ga, Pal5}).

\begin{proposition}
  \label{AC-var-O-Rc-fm}Let $(J_t, g_t)_t \subset
  \mathcal{A}\mathcal{K}\mathcal{S}$ and $(\Omega_t)_t \subset \mathcal{V}$ be
  two smooth paths such that $\dot{J}_t = ( \dot{J}_t)_{g_t}^T$. Then hold the
  first variation formula
  \begin{equation}
    \label{Acvr-O-Rc-fm} 2 \frac{d}{d t} \tmop{Ric}_{_{J_t}} (\Omega_t) =
    L_{J_t\nabla_{g_t}^{\ast_{\Omega_t}}\dot{J}_t -
    \nabla_{g_t}  \dot{\Omega}^{\ast}_t} \omega_t,
  \end{equation}
  with $\omega_t = g_t J_t$.
\end{proposition}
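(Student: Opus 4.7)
The plan is to split the derivative $\tfrac{d}{dt}\operatorname{Ric}_{J_t}(\Omega_t)$ at $t=0$ into a volume-form contribution (freezing $J_t \equiv J_0$, varying only $\Omega_t$) and a complex-structure contribution (freezing $\Omega_t \equiv \Omega_0$, varying only $(J_t, g_t)$), and to identify each with the corresponding piece of the Lie derivative on the right. Since $d\omega_t = 0$, Cartan's magic formula gives $L_X \omega_t = d\iota_X \omega_t$, so the right-hand side is an exact $(1,1)$-form.

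For the volume-form piece, write $\Omega_t = e^{\phi_t}\Omega_0$ with $\phi_0 = 0$ and $\dot\phi_0 = \dot\Omega_0^*$. The defining formula $h_\Omega(\alpha,\beta) = n!\, i^{n^2}\,\alpha\wedge\bar\beta/\Omega$ gives $h_{\Omega_t} = e^{-\phi_t} h_{\Omega_0}$ on the fixed line bundle $K_{X,J_0}$, so the conformal-change law for the Chern curvature of a Hermitian line bundle yields
$$\operatorname{Ric}_{J_0}(\Omega_t) = \operatorname{Ric}_{J_0}(\Omega_0) - i\, \partial_{J_0}\bar\partial_{J_0}\phi_t.$$
Differentiating at $t=0$ and doubling produces $-2i\,\partial_{J_0}\bar\partial_{J_0}\dot\Omega_0^*$. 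An almost-K\"ahler calculation using $\iota_{\nabla_{g_0}f}\omega_0 = J_0^*df$, $d\omega_0 = 0$, and the type-$(1,1)$ identity for $d(J_0^*df)$ on functions (valid without integrability of $J_0$) identifies this with $L_{-\nabla_{g_0}\dot\Omega_0^*}\omega_0$, the $-\nabla_{g_0}\dot\Omega_0^*$ piece of the target Lie derivative.

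For the pure complex-structure piece, I would use the first-variation formula for $\operatorname{Ric}_{J}(\omega)$ along almost-K\"ahler deformations with $\dot J_t = (\dot J_t)^T_{g_t}$ established in the companion paper \cite{Pal5}. Combined with the identity $\operatorname{Ric}_J(\omega) = \operatorname{Ric}_J(\omega^n)$ from the previous section and the conformal-change law above, one writes
$$\operatorname{Ric}_{J_t}(\Omega_0) = \operatorname{Ric}_{J_t}(\omega_t) + i\, \partial_{J_t}\bar\partial_{J_t}\log\bigl(\omega_t^n/\Omega_0\bigr)$$
and differentiates at $t=0$. Absorbing the resulting divergence-type terms into the $\Omega_0$-weighted adjoint $\nabla_{g_0}^{\ast_{\Omega_0}}$ via integration by parts in the $\Omega_0$-inner product yields $\tfrac{1}{2}\, L_{\nabla_{g_0}^{\ast_{\Omega_0}}(J_0\dot J_0)}\omega_0$, the remaining piece of the target.

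The main technical obstacle is controlling $\tfrac{d}{dt}(\partial_{J_t}\bar\partial_{J_t}\psi_t)|_{t=0}$ in the almost-complex (non-integrable) setting: neither $\partial_{J}^2$ nor $\bar\partial_{J}^2$ vanishes in general, and the variation of $\bar\partial_{J_t}$ in $J_t$ introduces Nijenhuis-tensor contributions absent in the K\"ahler case. The anti-invariance hypothesis $\dot J_t = (\dot J_t)^T_{g_t}$ is precisely what forces these non-integrable pieces to cancel at the level of the resulting two-form, leaving a globally exact $(1,1)$-form that matches the claimed Lie derivative on the right.
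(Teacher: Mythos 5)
Your splitting of the derivative into a volume-form piece and a complex-structure piece, followed by Cartan's formula, is indeed the skeleton of the paper's argument, but both halves of your plan have problems. For the volume half, the conformal-change law you invoke is false in the non-integrable setting: for an almost complex structure the Chern--Ricci form changes by $\tmop{Ric}_{J_0}(e^{\phi}\Omega_0)=\tmop{Ric}_{J_0}(\Omega_0)-d d^c_{J_0}\phi$, and $d d^c_{J_0}\phi$ is \emph{not} $i\partial_{J_0}\overline{\partial}_{J_0}\phi$, nor of type $(1,1)$; one has $d d^c_J u=i\partial_J\overline{\partial}_J u-i\overline{\partial}_J u\cdot\bar{\tau}_J+i\partial_J u\cdot\tau_J$, which is exactly formula (\ref{ddc-expr}) in the paper's final remark, where the vanishing of the torsion terms is a special feature of the Fano situation, not a general fact. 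So the ``type-$(1,1)$ identity for $d(J_0^{\ast}df)$ valid without integrability'' you appeal to does not exist, and $-2i\partial_{J_0}\overline{\partial}_{J_0}\dot{\Omega}^{\ast}_0$ is not $L_{-\nabla_{g_0}\dot{\Omega}^{\ast}_0}\omega_0$ in general. The slip is repairable, and in fact the detour through $i\partial\overline{\partial}$ is unnecessary: since $\nabla_{g}u\neg\omega$ is (up to the normalisation of $d^c$) $d^c_J u$, the correction term $-d d^c_{J_\tau}\dot{\Omega}^{\ast}_\tau$ is directly $d$ of $-\nabla_{g_\tau}\dot{\Omega}^{\ast}_\tau\neg\omega_\tau$, i.e.\ precisely the $-\nabla_{g_t}\dot{\Omega}^{\ast}_t$ part of the Lie derivative; this is how the paper treats the variable volume form.

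The genuine gap is the complex-structure half, which is the entire content of the proposition: the fixed-volume identity $2\frac{d}{dt}\tmop{Ric}_{J_t}(\Omega)=L_{\nabla_{g_t}^{\ast_{\Omega}}(J_t\dot{J}_t)}\omega_t$. You do not prove it; you outsource it to ``the first-variation formula established in \cite{Pal5}'' (the paper only says ``compare with \cite{Pal5}'' and proves the formula from scratch), and you then assert, without argument, that the hypothesis $\dot{J}_t=(\dot{J}_t)^T_{g_t}$ ``forces the non-integrable pieces to cancel'' and that divergence-type terms can be ``absorbed into $\nabla_{g_0}^{\ast_{\Omega_0}}$ by integration by parts''. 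The target is a pointwise identity of $2$-forms, so integration by parts is not the mechanism; what is needed is an actual local computation. In the paper this occupies Steps I--IIb: the variation formula (\ref{varDBar}) for $\overline{\partial}_{J_t}$ acting on a moving frame of $(1,0)$-forms, a frame made parallel at a point for the \emph{Chern} connection of $\Lambda^{1,0}_{J_\tau}T^{\ast}_X$ (the Levi-Civita-parallel frame of the K\"ahler case is unavailable when $\nabla_g J\neq 0$), the identities (\ref{covJ-Nij}) and (\ref{anti-Lin-CovJ}) relating $\nabla_g J$ to the Nijenhuis tensor, and the trace identities (\ref{Trace1}), (\ref{Trace2}); it is exactly there that the symmetry $\dot{J}=(\dot{J})^T_g$ (a $g$-symmetry hypothesis --- anti-invariance $\dot{J}J=-J\dot{J}$ is automatic from $J^2=-\mathbbm{I}$) is used to turn the Nijenhuis contributions into $\tmop{Tr}_{\mathbbm{R}}(J\dot{J}\nabla_g J\cdot\eta)$ and to combine them with $\nabla_g^{\ast_{\Omega}}\dot{J}$ into $J\nabla_g^{\ast_{\Omega}}(J\dot{J})\neg g$. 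Until you either carry out this computation or quote a precise statement that contains it, your proposal records the expected cancellation but does not establish it.
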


\begin{proof}
  {\tmstrong{STEP I. Local expressions}}. We consider first the case of
  constant volume form $\Omega$. We remind a general basic identity. Let $(L,
  \overline{\partial}_L, h)$ be a hermitian line bundle, equipped with a
  $\left( 0, 1 \right)$-type connection, over an almost complex manifold $(X,
  J)$ and let $D_{L, h} = \partial_{L, h} + \overline{\partial}_L$ be the
  induced Chern connection. In explicit terms $\partial_{L, h} \assign h^{- 1}
  \cdot \partial_{\overline{L^{\ast}}} \cdot h$. We observe that for any local
  non-vanishing section $\sigma \in C^{\infty} (U, L \smallsetminus 0)$ over
  an open set $U \subset X$, holds the identity
  \begin{eqnarray*}
    \sigma^{- 1} \partial_{L, h} \sigma (\eta) & = & \left. | \sigma |_h^{- 2}
    h ( \partial_{L, h} \sigma (\eta), \sigma \right)\\
    &  & \\
    & = & | \sigma |_h^{- 2} \left[ \eta_J^{1, 0} . | \sigma |_h^2 - h (
    \sigma, \overline{\partial}_L \sigma ( \overline{\eta}) \right)]\\
    &  & \\
    & = & \eta_J^{1, 0} . \log | \sigma |_h^2 - \overline{\sigma^{- 1}
    \overline{\partial}_L \sigma ( \overline{\eta})},
  \end{eqnarray*}
  for all $\eta \in T_X \otimes_{_{\mathbbm{R}}} \mathbbm{C}$. We infer the
  formula
  \begin{eqnarray*}
    i \sigma^{- 1} D_{L, h} \sigma & = & \left. i \partial_{_J} \log | \sigma
    |_h^2 + 2 \Re e ( i \sigma^{- 1} \overline{\partial}_L \sigma \right) .
  \end{eqnarray*}
  In the case $L = K_{X, J_t} \assign \Lambda_{J_t}^{n, 0} T^{\ast}_X $ and $h
  \equiv h_{\Omega}$ we get for all
  \[ \beta_t = \beta^{1, 0}_{1, t} \wedge \ldots \wedge \beta^{1, 0}_{n, t}
     \in C^{\infty} \left( U, K_{X, J_t} \smallsetminus 0 \right), \]
 with $\beta^{1,
     0}_{r, t} \assign \beta^{1, 0}_{r, J_t}$, $\beta_r \in
     C^{\infty} \left( U, T^{\ast}_X \otimes_{_{\mathbbm{R}}} \mathbbm{C}
     \right)$,  $r = 1, \ldots, n$,
  the formula for the $1$-form $\alpha_t $,
  \begin{eqnarray*}
    \alpha_t \assign i \beta_t^{- 1} D_{K_{X, J_t}, h_{\Omega}} \beta_t  = 
    i \partial_{_{J_t}} \log \frac{i^{n^2} \beta_t \wedge
    \bar{\beta}_t}{\Omega} + 2 \Re e \left( i \beta_t^{- 1}
    \overline{\partial}_{K_{X, J_t}} \beta_t \right) .
  \end{eqnarray*}
  We also notice the local expression $\tmop{Ric}_{J_t} (\Omega) = -
  i\mathcal{C}_{h_{\Omega}} \left( K_{X, J_t} \right) = - d \alpha_t$. In
  order to expand the time derivative of the expression
  \begin{eqnarray*}
    \alpha_t \left( \eta \right) & = & i \eta_{_{J_t}}^{1, 0} . \log
    \frac{i^{n^2} \beta_t \wedge \bar{\beta}_t}{\Omega}  \\
    & &\\
    &+& 2 \Re e
    \left[ i \beta_t^{- 1} \sum_{r = 1}^n \beta^{1, 0}_{1, t} \wedge \ldots
    \wedge \left( \eta_{_{J_t}}^{0, 1} \neg \overline{\partial}_{_{J_t}}
    \beta^{1, 0}_{r, t} \right) \wedge \ldots \wedge \beta^{1, 0}_{n, t}
    \right],
  \end{eqnarray*}
  we observe first the formula
  \begin{equation}
    \label{varDBar} 2 \frac{d}{d t}  \left( \overline{\partial}_{_{J_t}}
    \beta^{1, 0}_{_{J_t}} \right) = J_t  \dot{J}_t \neg \left[ \left( d - 2
    \overline{\partial}_{_{J_t}} \right) \beta^{1, 0}_{_{J_t}} \right] - i
    \left[ d \left( \beta \cdot \dot{J}_t \right) \right]_{_{J_t}}^{1, 1}.
  \end{equation}
  We notice indeed that for bi-degree reasons holds the identity
  \begin{eqnarray*}
    2 \overline{\partial}_{_{J_t}} \beta^{1, 0}_{_{J_t}} & = & 2 \left( d
    \beta^{1, 0}_{_{J_t}} \right)_{J_t}^{1, 1} = \; d \beta^{1, 0}_{_{J_t}} +
    J^{\ast}_t d \beta^{1, 0}_{_{J_t}} J_t .
  \end{eqnarray*}
  Then time deriving the latter we infer the required formula (\ref{varDBar}).
  
  {\tmstrong{STEP II. Local choices}}. We fix an arbitrary time $\tau$. We
  want to compute the time derivative $\dot{\alpha}_{\tau} (\eta)$. We take
  the open set $U \subset X$ relatively compact. Then for a sufficiently small
  $\varepsilon > 0$, the bundle map
  \begin{eqnarray*}
    \varphi_t \assign \det_{_{\mathbbm{C}}} \pi_{_{J_t}}^{1, 0} :
    \Lambda_{J_{\tau}}^{n, 0} T^{\ast}_U & \longrightarrow & \Lambda_{J_t}^{n,
    0} T^{\ast}_U \\
    &  & \\
    \beta_1 \wedge \ldots \wedge \beta_n & \longmapsto & \beta_t \assign
    \beta^{1, 0}_{1, t} \wedge \ldots \wedge \beta^{1, 0}_{n, t}  \;,
  \end{eqnarray*}
  is an isomorphism for all $t \in \left( \tau - \varepsilon, \tau +
  \varepsilon \right)$. We set for notations simplicity $D_t \assign D_{K_{X, J_t},
  h_{\Omega}}$. We consider also the connection $D_{\varphi_t} \assign
  \varphi^{\ast}_t D_t$ over the bundle $\Lambda_{J_{\tau}}^{n, 0}
  T^{\ast}_U$. Explicitly $D_{\varphi_t} \beta = \varphi_t^{- 1} D_t
  \beta_t$. Then the expression $D_t \beta_t = \alpha_t \otimes \beta_t$,
  implies $D_{\varphi_t} \beta = \alpha_t \otimes \beta$. We deduce that
  \begin{eqnarray*}
    \dot{\alpha}_t & = & \frac{d}{d t} D_{\varphi_t},
  \end{eqnarray*}
  is independent of the choice of $\beta$. We want to compute
  $\dot{\alpha}_{\tau}$ at an arbitrary point $p \in U$. 
  
  {\tmstrong{STEP IIa. The K\"ahler case}}. (We consider first this case
  since is drastically simpler). Let $\nabla_{g_{\tau}}$ be the Levi-Civita
  connection of $g_{\tau}$. Using parallel transport and the K\"ahler
  assumption $\nabla_{g_{\tau}} J_{\tau} = 0$, we can construct (up to
  shrinking $U$ around $p$), a frame $( \beta_r )^n_{r = 1}
  \subset C^{\infty} ( U, \Lambda_{J_{\tau}}^{1, 0} T^{\ast}_U)$,
  satisfying $\nabla_{g_{\tau}} \beta_r  \left( p \right) = 0$, for all $r =
  1, \ldots, n$, and the identity 
  $$
  \omega_{\tau} = \frac{i}{2}  \sum_{r = 1}^n
  \beta_r \wedge \bar{\beta}_r ,
  $$ over $U$. Then $d V_{g_{\tau}} = 2^{- n}
  i^{n^2} \beta_{\tau} \wedge \bar{\beta}_{\tau}$. We set now $f_{\tau}
  \assign \log \frac{d V_{g_{\tau}}}{\Omega}$. The identity $d \beta_r =
  \tmop{Alt} \nabla_{g_{\tau}} \beta_r$, implies $d \beta_r \left( p \right) =
  0$. Then formula (\ref{varDBar}) implies the identity at the point $p$,
  \begin{eqnarray*}
    2 \frac{d}{d t} _{\mid_{t = \tau}} \left( \overline{\partial}_{_{J_t}}
    \beta^{1, 0}_{r, t} \right) & = & - i \beta_r  \left( \nabla_{_{T_X,
    g_{\tau}}}  \dot{J}_{\tau} \right)_{_{J_{\tau}}}^{1, 1}\\
    &  & \\
    & = & - i \beta_r \tmop{Alt} \left( \nabla^{1, 0}_{g_{\tau}, J_{\tau}} 
    \dot{J}_{\tau} \right) .
  \end{eqnarray*}
  (The last equality follows from the K\"ahler assumption). We deduce
  \begin{eqnarray*}
    \eta_{_{J_{\tau}}}^{0, 1} \neg 2 \frac{d}{d t} _{\mid_{t = \tau}} \left(
    \overline{\partial}_{_{J_t}} \beta^{1, 0}_{r, t} \right) & = & i \beta_r
    \nabla^{1, 0}_{g_{\tau}, J_{\tau}}  \dot{J}_{\tau} \cdot
    \eta_{_{J_{\tau}}}^{0, 1} \; = \; i \beta_r \nabla^{1, 0}_{g_{\tau},
    J_{\tau}}  \dot{J}_{\tau} \cdot \eta,
  \end{eqnarray*}
  at the point $p$. (The last equality follows also from the K\"ahler
  assumption). Using this last identity we obtain the expression at the point
  $p$,
  \begin{eqnarray*}
    \dot{\alpha}_{\tau} (\eta) & = & \frac{1}{2}  \dot{J}_{\tau} \eta .
    f_{\tau} + \eta_{_{J_{\tau}}}^{1, 0} . \Re e \left( \beta^{- 1}_{\tau}
    \sum_{r = 1}^n \beta_1 \wedge \ldots \wedge \left( \beta_r  \dot{J}_{\tau}
    \right)_{_{J_{\tau}}}^{1, 0} \wedge \ldots \wedge \beta_n \right)\\
    &  & \\
    & - & \Re e \left( \beta^{- 1}_{\tau} \sum_{r = 1}^n \beta_1 \wedge
    \ldots \wedge \beta_r \nabla^{1, 0}_{g_{\tau}, J_{\tau}}  \dot{J}_{\tau}
    \cdot \eta \wedge \ldots \wedge \beta_n \right)\\
    &  & \\
    & = & - \frac{1}{2}  \left( \tmop{Tr}_{_{\mathbbm{R}}} \nabla_{g_{\tau}} 
    \dot{J}_{\tau} - d f_{\tau} \cdot \dot{J}_{\tau} \right) (\eta),
  \end{eqnarray*}
  thanks to the elementary identities $( \beta_r  \dot{J}_{\tau})_{_{J_{\tau}}}^{1, 0} = 0$, $\tmop{Tr}_{_{\mathbbm{C}}} A =
  \tmop{Tr}_{_{\mathbbm{C}}} A^{\ast}$, and \\$\tmop{Tr}_{_{\mathbbm{R}}} B = 2
  \Re e \left( \tmop{Tr}_{_{\mathbbm{C}}} B_{_J}^{1, 0} \right)$, for all $B
  \in \tmop{End}_{_{\mathbbm{R}}} (T_X)$. Using now the symmetry identities
  $\dot{J}_{\tau} = ( \dot{J}_{\tau} )_{g_{\tau}}^T$ and
  $\nabla_{g_{\tau}, \xi}  \dot{J}_{\tau} =( \nabla_{g_{\tau}, \xi} 
  \dot{J}_{\tau} )_{g_{\tau}}^T$, we obtain
  \begin{eqnarray*}
    2 \dot{\alpha}_{\tau} & = & \nabla_{g_{\tau}}^{\ast_{\Omega}} 
    \dot{J}_{\tau} \neg g_{\tau},
  \end{eqnarray*}
  over $U$. We conclude, thanks to the K\"ahler condition and Cartan's
  identity, the required formula for arbitrary time $t$, in the case of
  constant volume form.
  
  {\tmstrong{STEP IIb. The almost K\"ahler case}}. We remind first that in
  this case holds the classical identity
  \begin{equation}
    \label{covJ-Nij} g \left( \nabla_{g, \xi} J \cdot \eta, \mu \right) = - 2
    g \left( J \xi, N_{_J} \left( \eta, \mu \right) \right),
  \end{equation}
  where $N_{_J}$ is the Nijenhuis tensor, defined by the formula
  \begin{eqnarray*}
    4 N_{_J} \left( \xi, \eta \right) & \assign & [\xi, \eta] + J [\xi, J
    \eta] + J [J \xi, \eta] - [J \xi, J \eta] .
  \end{eqnarray*}
  The identity (\ref{covJ-Nij}) combined with the identity $N_{_J} \left( J \eta, \mu \right) = - J
  N_{_J} \left( \eta, \mu \right)$, implies
  \begin{equation}
    \label{anti-Lin-CovJ} \nabla_{g, J \xi} J = - J \nabla_{g, \xi} J .
  \end{equation}
  We consider also the Chern connection $D_{\Lambda_{_J}^{1, 0}
  T^{\ast}_X}^{\omega}$ of the complex vector bundle $\Lambda_J^{1, 0}
  T^{\ast}_X$ with respect to the hermitian product
  \begin{eqnarray*}
    \langle \alpha, \beta \rangle_{_{\omega}} & \assign & \frac{1}{2}
    \tmop{Tr}_{\omega} \left( i \alpha \wedge \bar{\beta} \right) .
  \end{eqnarray*}
  This connection is obviously the dual of the Chern connection $D_{T_{X,
  J}^{1, 0}}^{\omega_{\tau}}$ of the hermitian vector bundle $( T_{X,
  J}^{1, 0}, \omega)$. We denote by $D_{T_{X, J}}^{\omega}$ the Chern
  connection of $\left( T_{X, J}, \omega \right)$. By abuse of notations, we
  denote with the same symbol its complex linear extension over $T_X
  \otimes_{_{\mathbbm{R}}} \mathbbm{C}$. The latter satisfies the formula
  \begin{eqnarray*}
    D_{T_{X, J}}^{\omega} \xi & = & D_{T_{X, J}^{1, 0}}^{\omega} \xi^{1,
    0}_{_J} + \overline{^{^{^{^{}}}} D_{T_{X, J}^{1, 0}}^{\omega} 
    \overline{\xi_{_J}^{0, 1}}}, \hspace{2em} \forall \xi \in C^{\infty}
    \left( X, T_X \otimes_{_{\mathbbm{R}}} \mathbbm{C} \right) .
  \end{eqnarray*}
  In the almost K\"ahler case, $D_{T_{X, J}}^{\omega}$ is related to the
  Levi-Civita connection $\nabla_g$ (see \cite{Pal2} and use
  identity (\ref{covJ-Nij})), via the formula
  \begin{equation}
    \label{Chern-LC} D_{T_{X, J}, \xi}^{\omega} \eta \; = \; \nabla_{g_{}
    \nocomma \nocomma, \xi} \eta - \frac{1}{2} J \nabla_{g_{} \nocomma
    \nocomma, \xi} J \cdot \eta \nocomma,
  \end{equation}
  for all $\xi, \eta \in C^{\infty} \left( X, T_X \otimes_{_{\mathbbm{R}}}
  \mathbbm{C} \right)$. Thus
  \begin{eqnarray*}
    D_{T_{X, J}^{1, 0}}^{\omega} \xi^{1, 0}_{_J} & = & \nabla_g \xi^{1,
    0}_{_J} - \frac{1}{2} J \nabla_{g_{} \nocomma \nocomma} J \cdot \xi^{1,
    0}_{_J},
  \end{eqnarray*}
  and
  \begin{eqnarray*}
    D^{\omega}_{\Lambda_{_J}^{1, 0} T^{\ast}_X} \beta_{_J}^{1, 0} \cdot
    \xi^{1, 0}_{_J} & = & \nabla_g \beta_{_J}^{1, 0} \cdot \xi^{1, 0}_{_J} +
    \frac{1}{2} i \beta_{_J}^{1, 0} \cdot \nabla_{g_{} \nocomma \nocomma} J
    \cdot \xi^{1, 0}_{_J} \\
    &  & \\
    & = & \nabla_g \beta_{_J}^{1, 0} \cdot \xi^{1, 0}_{_J},
  \end{eqnarray*}
  since $\nabla_{g_{} \nocomma \nocomma} J\cdot J = - J \nabla_{g_{} \nocomma
  \nocomma} J$. We apply now these considerations to the almost K\"ahler
  structure $\left( J_{\tau}, g_{\tau} \right)$. Using parallel transport, we
  can construct a complex frame $\left( \beta_r \right)^n_{r = 1} \subset
  C^{\infty} ( U, \Lambda_{J_{\tau}}^{1, 0} T^{\ast}_U)$,
  satisfying $D^{\omega_{\tau}}_{\Lambda_{_{J_{\tau}}}^{1, 0} T^{\ast}_X}
  \beta_r  \left( p \right) = 0$, for all $r = 1, \ldots, n$, and the identity
  $$
  \omega_{\tau} = \frac{i}{2}  \sum_{r = 1}^n \beta_r \wedge \bar{\beta}_r ,
  $$
  over $U$. Then as before, holds the identity $d V_{g_{\tau}} = 2^{- n} i^{n^2}
  \beta_{\tau} \wedge \bar{\beta}_{\tau}$. We infer
  \begin{equation}
    \label{vanishDbar} \overline{\partial}_{_{J_{\tau}}} \beta_r \left( p
    \right) = 0,
  \end{equation}
  \begin{equation}
    \label{vanishLVCivita}  \left( \nabla_{g_{\tau}} \beta_r \cdot \xi^{1,
    0}_{_{J_{\tau}}} \right) \left( p \right) = 0,
  \end{equation}
  \begin{equation}
    \label{vanishDel} \partial_{_{J_{\tau}}} \beta_r \left( p \right) = 0,
  \end{equation}
  for all $r = 1, \ldots, n$. The last one follows indeed from the elementary
  identities
  \begin{eqnarray*}
    \partial_{_{J_{\tau}}} \beta_r  \left( \xi, \eta \right)  \;= \; d \beta_r
    \left( \xi, \eta \right)  \;=\;  \nabla_{g_{\tau}, \xi} \beta_r \cdot \eta
    - \nabla_{g_{\tau}, \eta} \beta_r \cdot \xi,  
  \end{eqnarray*}
  for all $\xi,
    \eta \in C^{\infty}( X, T^{1, 0}_{X, J_{\tau}})$.
  We observe now that formula (\ref{varDBar}) writes as
  \begin{eqnarray*}
    2 \frac{d}{d t}  \left( \overline{\partial}_{_{J_t}} \beta^{1, 0}_{_{J_t}}
    \right)  =  J_t  \dot{J}_t \neg \left[ \left( \partial_{_{J_t}} -
    \overline{\partial}_{_{J_t}} \right) \beta^{1, 0}_{_{J_t}} - \beta^{1,
    0}_{_{J_t}} \cdot N_{_{J_t}} \right] - i \left[ d \left( \beta \cdot
    \dot{J}_t \right) \right]_{_{J_t}}^{1, 1} .
  \end{eqnarray*}
  Thus at the point $p$ holds the identity
\begin{eqnarray*}
2 \frac{d}{d t} _{\mid_{t = \tau}} \left( \overline{\partial}_{_{J_t}}
    \beta^{1, 0}_{r, t} \right) \left( \eta_{_{J_{\tau}}}^{0, 1}, \mu \right)
    & = & -\,\beta_r \cdot N_{_{J_{\tau}}} \left( \eta_{_{J_{\tau}}}^{0, 1},
    J_{\tau}  \dot{J}_{\tau} \mu \right) \\
    & &\\
    &-& i \left[ d \left( \beta_r \cdot
    \dot{J}_{\tau} \right) \right]_{_{J_{\tau}}}^{1, 1} \left(
    \eta_{_{J_{\tau}}}^{0, 1}, \mu \right) \\
    &  & \\
    & = & i\, \beta_r \cdot N_{_{J_{\tau}}} \left( \eta, \dot{J}_{\tau} \mu
    \right) \\
    & &\\
    &-& i \,d \left( \beta_r \cdot \dot{J}_{\tau} \right) \left(
    \eta_{_{J_{\tau}}}^{0, 1}, \mu_{_{J_{\tau}}}^{1, 0} \right) .
  \end{eqnarray*}
  We set for notations simplicity $\eta_{\tau}^{0, 1} \assign
  \eta_{_{J_{\tau}}}^{0, 1}$, $\mu_{\tau}^{1, 0} : = \mu_{_{J_{\tau}}}^{1, 0}$
  and we observe the expansion
  \begin{eqnarray*}
    d \left( \beta_r \cdot \dot{J}_{\tau} \right) \left( \eta_{\tau}^{0, 1},
    \mu_{\tau}^{1, 0} \right) & = & \nabla_{g_{\tau}, \eta_{\tau}^{0, 1}}
    \beta_r \cdot \dot{J}_{\tau} \mu_{\tau}^{1, 0} + \beta_r \cdot
    \nabla_{g_{\tau}, \eta_{\tau}^{0, 1}}  \dot{J}_{\tau} \cdot \mu_{\tau}^{1,
    0}\\
    &  & \\
    & - & \nabla_{g_{\tau}, \mu_{\tau}^{1, 0}} \beta_r \cdot \dot{J}_{\tau}
    \eta_{\tau}^{0, 1} - \beta_r \cdot \nabla_{g_{\tau}, \mu_{\tau}^{1, 0}} 
    \dot{J}_{\tau} \cdot \eta_{\tau}^{0, 1} .
  \end{eqnarray*}
  We notice also the trivial identity $\beta_r \cdot \dot{J}_{\tau}
  \mu_{\tau}^{1, 0} = \beta_r \cdot ( \dot{J}_{\tau} \mu)_{_{J_{\tau}}}^{0, 1} = 0$, over $U$. Taking a covariant derivative
  of this we infer
  \begin{eqnarray*}
    0 & = & \nabla_{g_{\tau}, \eta_{\tau}^{0, 1}} \beta_r \cdot \dot{J}_{\tau}
    \mu_{\tau}^{1, 0} + \beta_r \cdot \nabla_{g_{\tau}, \eta_{\tau}^{0, 1}} 
    \dot{J}_{\tau} \cdot \mu_{\tau}^{1, 0} + \beta_r \cdot \dot{J}_{\tau}
    \nabla_{g_{\tau}, \eta_{\tau}^{0, 1}} \mu_{\tau}^{1, 0} .
  \end{eqnarray*}
  The identity (\ref{anti-Lin-CovJ}) implies
  \begin{eqnarray*}
    \nabla_{g_{\tau}, \eta_{\tau}^{0, 1}} \mu_{\tau}^{1, 0} & = & \left(
    \nabla_{g_{\tau}, \eta_{\tau}^{0, 1}} \mu - \frac{i}{2} \nabla_{g_{\tau},
    \eta} J \cdot \mu \right)_{_{J_{\tau}}}^{1, 0} .
  \end{eqnarray*}
  Thus
  \begin{eqnarray*}
    \beta_r \cdot \dot{J}_{\tau} \nabla_{g_{\tau}, \eta_{\tau}^{0, 1}}
    \mu_{\tau} & = & \beta_r \cdot \left[ \dot{J}_{\tau}  \left(
    \nabla_{g_{\tau}, \eta_{\tau}^{0, 1}} \mu - \frac{i}{2} \nabla_{g_{\tau},
    \eta} J \cdot \mu \right) \right]_{_{J_{\tau}}}^{0, 1} = \; 0,
  \end{eqnarray*}
  and
  \begin{equation}
    \label{dif-BJ} d \left( \beta_r \cdot \dot{J}_{\tau} \right) \left(
    \eta_{\tau}^{0, 1}, \mu_{\tau}^{1, 0} \right) \; = \; - \beta_r \cdot
    \nabla_{g_{\tau}, \mu_{\tau}^{1, 0}}  \dot{J}_{\tau} \cdot \eta_{\tau}^{0,
    1},
  \end{equation}
  at the point $p$, since
  \begin{eqnarray*}
    \nabla_{g_{\tau}, \mu_{\tau}^{1, 0}} \beta_r \cdot \dot{J}_{\tau}
    \eta_{\tau}^{0, 1} & = & \nabla_{g_{\tau}, \mu_{\tau}^{1, 0}} \beta_r
    \cdot \left( \dot{J}_{\tau} \eta \right)_{_{J_{\tau}}}^{1, 0} \; = \; 0,
  \end{eqnarray*}
  at $p$ thanks to (\ref{vanishLVCivita}). Taking a covariant derivative of
  the identity 
  $$
  \dot{J}_{\tau} J_{\tau} + J_{\tau}  \dot{J}_{\tau} = 0,
  $$
  we
  obtain
  \begin{eqnarray*}
    \nabla_{g_{\tau}}  \dot{J}_{\tau} J_{\tau} + \dot{J}_{\tau}
    \nabla_{g_{\tau}} J_{\tau} + \nabla_{g_{\tau}} J_{\tau}  \dot{J}_{\tau} +
    J_{\tau} \nabla_{g_{\tau}}  \dot{J}_{\tau}  =  0,
  \end{eqnarray*}
  and thus
  \begin{eqnarray*}
    2 \beta_r \cdot \nabla_{g_{\tau}, \mu_{\tau}^{1, 0}}  \dot{J}_{\tau} \cdot
    \eta^{0, 1}_{\tau} & = & 2 \beta_r \cdot \left( \nabla_{g_{\tau},
    \mu_{\tau}^{1, 0}}  \dot{J}_{\tau} \cdot \eta \right)_{_{J_{\tau}}}^{1, 0}\\
    & &\\
    &-& i \,\beta_r \cdot \left( \dot{J}_{\tau} \nabla_{g_{\tau}, \mu^{1,
    0}_{\tau}} J_{\tau} + \nabla_{g_{\tau}, \mu_{\tau}^{1, 0}} J_{\tau} 
    \dot{J}_{\tau} \right) \eta\\
    &  & \\
    & = & 2 \beta_r \cdot \nabla_{g_{\tau}, \mu_{\tau}^{1, 0}} 
    \dot{J}_{\tau} \cdot \eta - i \beta_r \cdot \dot{J}_{\tau}
    \nabla_{g_{\tau}, \mu} J_{\tau} \cdot \eta,
  \end{eqnarray*}
  thanks to (\ref{anti-Lin-CovJ}) and the fact that $\beta_r$ is of type
  $\left( 1, 0 \right)$ with respect to $J_{\tau}$. We deduce
  \begin{eqnarray*}
    - i d \left( \beta_r \cdot \dot{J}_{\tau} \right) \left( \eta_{\tau}^{0,
    1}, \mu_{\tau}^{1, 0} \right)  =  i\, \beta_r \cdot \left( \nabla^{1,
    0}_{g_{\tau}, J_{\tau}, \mu}  \dot{J}_{\tau} - \frac{1}{2} J_{\tau} 
    \dot{J}_{\tau} \nabla_{g_{\tau}, \mu} J_{\tau} \right) \eta,
  \end{eqnarray*}
  thanks to (\ref{dif-BJ}), and thus
  \begin{eqnarray}
    \label{var-dbar1form} \eta_{_{J_{\tau}}}^{0, 1} \neg 2 \frac{d}{d t}
    _{\mid_{t = \tau}} \left( \overline{\partial}_{_{J_t}} \beta^{1, 0}_{r, t}
    \right) &=& i\, \beta_r \cdot N_{_{J_{\tau}}} \left( \eta, \dot{J}_{\tau}
    \bullet \right) \nonumber
    \\\nonumber
    & &\\
    &+& i\, \beta_r \cdot \left( \nabla^{1, 0}_{g_{\tau},
    J_{\tau}}  \dot{J}_{\tau} - \frac{1}{2} J_{\tau}  \dot{J}_{\tau}
    \nabla_{g_{\tau}} J_{\tau} \right) \eta .
  \end{eqnarray}
  Using (\ref{vanishDbar}) and (\ref{var-dbar1form}) we obtain
  \begin{eqnarray*}
    2 \dot{\alpha}_{\tau} (\eta) & = & \dot{J}_{\tau} \eta . f_{\tau} \\
    & &\\
    &+& 2 \Re
    e \left\{ i \beta^{- 1}_{\tau} \sum_{l = 1}^n \beta_1 \wedge \ldots \wedge
    \left[ \eta_{_{J_{\tau}}}^{0, 1} \neg 2 \frac{d}{d t} _{\mid_{t = \tau}}
    \left( \overline{\partial}_{_{J_t}} \beta^{1, 0}_{l, t} \right) \right]
    \wedge \ldots \wedge \beta_n \right\}\\
    &  & \\
    & = & d f_{\tau} \cdot \dot{J}_{\tau} \eta \\
    & &\\
   & -& 2 \Re e
    \tmop{Tr}_{_{\mathbbm{C}}} \left[ N_{_{J_{\tau}}} \left( \eta,
    \dot{J}_{\tau} \bullet \right) + \left( \nabla^{1, 0}_{g_{\tau}, J_{\tau}}
    \dot{J}_{\tau} - \frac{1}{2} J_{\tau}  \dot{J}_{\tau} \nabla_{g_{\tau}}
    J_{\tau} \right) \eta \right]\\
    &  & \\
    & = & d f_{\tau} \cdot \dot{J}_{\tau} \eta \; - \;
    \tmop{Tr}_{_{\mathbbm{R}}} \left[ N_{_{J_{\tau}}} \left( \eta,
    \dot{J}_{\tau} \bullet \right) + \left( \nabla_{g_{\tau}}  \dot{J}_{\tau}
    - \frac{1}{2} J_{\tau}  \dot{J}_{\tau} \nabla_{g_{\tau}} J_{\tau} \right)
    \eta \right] .
  \end{eqnarray*}
  We show now the identity
  \begin{eqnarray}
    2 \tmop{Tr}_{_{\mathbbm{R}}} \left[ N_{_{J_{\tau}}} \left( \eta,
    \dot{J}_{\tau} \bullet \right) \right] & = & \tmop{Tr}_{_{\mathbbm{R}}}
    \left( J_{\tau}  \dot{J}_{\tau} \nabla_{g_{\tau}} J_{\tau} \cdot \eta
    \right) .\label{Trace1}
  \end{eqnarray}
  Indeed, let $\left( e_k \right)^{2 n}_{k = 1} \subset T_{X, p}$ be a
  $g_{\tau} \left( p \right)$-orthonormal basis. Using (\ref{covJ-Nij}),
  (\ref{anti-Lin-CovJ}) and the symmetry assumption $\dot{J}_{\tau} =
  ( \dot{J}_{\tau})_{g_{\tau}}^T$, we obtain
  \begin{eqnarray*}
    2 g \left( e_k, N_{_{J_{\tau}}} \left( \eta, \dot{J}_{\tau} e_k \right)
    \right) & = & g \left( \nabla_{g_{\tau}, J_{\tau} e_k} J_{\tau} \cdot
    \eta, \dot{J}_{\tau} e_k \right)\\
    &  & \\
    & = & - g \left( \dot{J}_{\tau} J_{\tau} \nabla_{g_{\tau}, e_k} J_{\tau}
    \cdot \eta, e_k \right)\\
    &  & \\
    & = & g \left( J_{\tau}  \dot{J}_{\tau} \nabla_{g_{\tau}, e_k} J_{\tau}
    \cdot \eta, e_k \right),
  \end{eqnarray*}
  and thus the required identity (\ref{Trace1}). We infer the formula
  \begin{eqnarray*}
    2 \dot{\alpha}_{\tau} (\eta) & = & -\,\tmop{Tr}_{_{\mathbbm{R}}} \left(
    \nabla_{g_{\tau}}  \dot{J}_{\tau} \cdot \eta \right) + d f_{\tau}
    \cdot \dot{J}_{\tau} \eta,
  \end{eqnarray*}
  over $U$. Using the symmetry identities $\dot{J_{\tau}} = ( \dot{J}_{\tau})_{g_{\tau}}^T$ and
  $\nabla_{g_{\tau}, \xi}  \dot{J}_{\tau} =( \nabla_{g_{\tau}, \xi} 
  \dot{J}_{\tau})_{g_{\tau}}^T$, we infer
  \begin{eqnarray*}
    2 \dot{\alpha}_{\tau} & = & \nabla_{g_{\tau}}^{\ast_{\Omega}} 
    \dot{J}_{\tau} \neg g_{\tau}\\
    &  & \\
    & = & -\,J_{\tau} \nabla_{g_{\tau}}^{\ast_{\Omega}} 
    \dot{J}_{\tau}  \neg \omega_{\tau},
  \end{eqnarray*}
  over $U$. We conclude the required variation formula for arbitrary time $t$,
  in the case of constant volume form. In the case of variable volume forms,
  we fix an arbitrary time $\tau$ and we time derive at $t = \tau$ the
  decomposition
  \begin{eqnarray*}
    \tmop{Ric}_{_{J_t}} (\Omega_t) & = & \tmop{Ric}_{_{J_t}} (\Omega_{\tau}) -
    d d_{_{J_t}}^c \log \frac{\Omega_t}{\Omega_{\tau}} .
  \end{eqnarray*}
  We obtain
  \begin{eqnarray*}
    \frac{d}{d t} _{\mid_{t = \tau}} \tmop{Ric}_{_{J_t}} (\Omega_t) & = &
    \frac{d}{d t} _{\mid_{t = \tau}} \tmop{Ric}_{_{J_t}} (\Omega_{\tau}) - d
    d_{_{J_{\tau}}}^c  \dot{\Omega}^{\ast}_{\tau},
  \end{eqnarray*}
  and thus
  \[ 2 \frac{d}{d t} \tmop{Ric}_{_{J_t}} (\Omega_t) = d \left[
     \left( J_t\nabla_{g_t}^{\ast_{\Omega_t}} \dot{J}_t
     - \nabla_{g_t}  \dot{\Omega}^{\ast}_t \right) \neg \, \omega_t 
     \right], \]
  thanks to the variation formula for the fixed volume form case. The
  conclusion follows from Cartan's identity for the Lie derivative of
  differential forms.
\end{proof}

We infer the following corollary.

\begin{corollary}
  \label{var-symp-Ricci}Let $\omega$ be a symplectic form and let $\left( J_t,
  \Omega_t \right)_t \subset \mathcal{J}^{\tmop{ac}}_{\omega} \times
  \mathcal{V}$ be an arbitrary smooth family. Then holds the variation formulas
  \begin{eqnarray}\label{CRvarG1}
    2 \frac{d}{d t} \tmop{Ric}_{J_t} (\Omega_t) & = & -
    L_{\nabla_{g_t}^{\ast_{\Omega_t}}  \dot{g}_t^{\ast} + \nabla_{g_t} 
    \dot{\Omega}_t^{\ast}} \omega - 2\, d \tmop{Tr}_{g_t}\left[\omega(\bullet
    \neg \,N_{_{J_t}})\dot{g}_t^{\ast}\right] ,
  \end{eqnarray}
  and
 \begin{eqnarray}\label{CRvarG2}
    2 \frac{d}{d t} \tmop{Ric}_{J_t} (\Omega_t) & = & -
    L_{\nabla_{g_t}^{\ast_{\Omega_t}}  \dot{g}_t^{\ast} + \nabla_{g_t} 
    \dot{\Omega}_t^{\ast}} \omega + d \tmop{Tr}_{g_t}\left[\omega(\bullet
    \neg \,\overline{\partial}_{T_{X, J_t}} \dot{g}_t^{\ast})\right] ,
  \end{eqnarray} 
with $g_t \assign - \omega J_t$.
\end{corollary}

\begin{proof}
  We have $\dot{g}^{\ast}_t = - J_t \dot{J_t}$ and thus the property
  $\dot{J}_t = ( \dot{J}_t)_{g_t}^T$, which allows to apply
  (\ref{Acvr-O-Rc-fm}). We notice now the equality 
 \begin{eqnarray*}
  \nabla_{g_t}^{\ast_{\Omega}} \left( J_t 
    \dot{J}_t \right)
    &=&-\nabla_{g_t,e_k} J_t\cdot \dot{J}_t e_k + J_t \nabla_{g_t}^{\ast_{\Omega}} 
    \dot{J}_t,
 \end{eqnarray*} 
with respect to a $g_t$-orthonormal local frame of $T_X$. We deduce
\begin{eqnarray*}
    2 \dot{\alpha}_t & = & -\,\left[\nabla_{g_t}^{\ast_{\Omega}} \left( J_t 
    \dot{J}_t \right)+\nabla_{g_t,e_k} J_t\cdot \dot{J}_t e_k\right] \neg \omega.
\end{eqnarray*}
The identity (\ref{covJ-Nij}) implies
\begin{eqnarray*}
 \omega(\nabla_{g_t,e_k} J_t\cdot \dot{J}_t e_k ,\xi) 
 & = &
 2 \,\omega(e_k, N_{_{J_t}}(\dot{J}_t e_k,J_t\xi))
 \\
 \\
 &=& 
 2 \,\omega(e_k, N_{_{J_t}}(J_t\dot{J}_t e_k,\xi))
 \\
 \\
 &=&
 -\,2\, d \tmop{Tr}_{g_t}\left[\omega(\xi
    \neg \,N_{_{J_t}})\dot{g}_t^{\ast}\right].
\end{eqnarray*}
We infer the variation formula (\ref{CRvarG1}). In order to show (\ref{CRvarG2}) we notice first the identity $\omega(e_k,N_{_{J_t}}(g^{-1}_t e^\ast_k,\xi))\equiv
0$, for arbitrary real local frame $(e_k)_k$ of $T_X$. Time deriving this we obtain
\begin{eqnarray*}
 \omega(e_k,N_{_{J_t}}(\dot{g}^{\ast}_t e_k,\xi))&=&\omega(e_k,\dot{N}_{_{J_t}}(e_k,\xi)) ,
\end{eqnarray*}
with respect to our $g_t$-orthonormal local frame. Then the
general formula
\begin{eqnarray*}
  2\,\frac{d}{d t} N_{J_t} & = & \overline{\partial}_{T_{X, J_t}}  (J_t\dot{J}_t) + J_t\dot{J}_t N_{J_t}-(J_t\dot{J}_t) \neg N_{J_t}  \,,
\end{eqnarray*}
(see the proof of lemma 7 in \cite{Pal3}), implies
\begin{eqnarray*}
 \omega(e_k,N_{_{J_t}}(\dot{g}^{\ast}_t e_k,\xi))&=&\omega(e_k,N_{_{J_t}}(\dot{g}^{\ast}_t e_k,\xi)+N_{_{J_t}}(e_k,\dot{g}^{\ast}_t\xi)-\dot{g}^{\ast}_tN_{_{J_t}}(e_k,\xi))
 \\
 \\
 &-&\omega(e_k,\overline{\partial}_{T_{X, J_t}}\dot{g}_t^{\ast}(e_k,\xi))
 \\
 \\
 &=&
 \omega(e_k,N_{_{J_t}}(\dot{g}^{\ast}_t e_k,\xi))-\omega(\dot{g}^{\ast}_t e_k,N_{_{J_t}}(e_k,\xi))\\
 \\
 &-&\omega(\overline{\partial}_{T_{X, J_t}}\dot{g}_t^{\ast}(\xi,e_k),e_k) .
\end{eqnarray*}
Assuming for simplicity that the $g_t$-orthonormal local frame diagonalizes $\dot{g}_t^{\ast}$ we deduce the identity
\begin{eqnarray*}
 2\, \tmop{Tr}_{g_t}\left[\omega(\bullet
    \neg \,N_{_{J_t}})\dot{g}_t^{\ast}\right]
    &=&
 -\tmop{Tr}_{g_t}\left[\omega(\bullet
    \neg \,\overline{\partial}_{T_{X, J_t}} \dot{g}_t^{\ast})\right] ,   
\end{eqnarray*}
which implies the variation formula (\ref{CRvarG2}).
\end{proof}

Combining lemma \ref{acx-geodesic}, proposition \ref{F-conserv} and corollary
\ref{var-symp-Ricci}, we deduce the main theorem \ref{Main}.

\section{The decomposition of the Bakry-Emery-Ricci tensor}

We compare first the Riemannian Ricci tensor $\tmop{Ric}(g)$ with the $\omega$-Chern-Ricci
tensor.
\begin{lemma}
  Let $\left( X, J, g \right)$ be an almost K\"ahler manifold with symplectic
  form $\omega \assign g J$. Then holds the identity
  \begin{equation}
    \label{Chern-Ricci-Rm-Ricci} \tmop{Ric}_{_J} (\omega) (\xi, J \eta) =
    \tmop{Ric} (g) \left( \xi, \eta \right) 
    +\omega(\nabla_g^\ast\widehat{\,\nabla_g J\,}\xi,\eta)
    + \frac{1}{4}
    \tmop{Tr}_{_{\mathbbm{R}}} \left( \nabla_{g_{} \nocomma \nocomma, \xi} J
    \cdot \nabla_{g_{} \nocomma \nocomma, \eta} J \right),
  \end{equation}
  where $\widehat{\,\nabla_g J\,}(\xi,\eta):=\nabla_g J(\eta,\xi)$.
\end{lemma}

\begin{proof}
  Using formula (\ref{Chern-LC}), the standard Curvature identity
  \begin{eqnarray*}
    \left( \nabla_{g_{} \nocomma \nocomma, \xi} \nabla_{g_{} \nocomma
    \nocomma, \eta} - \nabla_{g_{} \nocomma \nocomma, \eta} \nabla_{g_{}
    \nocomma \nocomma, \xi} - \nabla_{g_{} \nocomma \nocomma, [\xi, \eta]} 
    \right) \mu  = \mathcal{R}_g \left( \xi, \eta \right) \mu,
  \end{eqnarray*}
a similar one for the Chern curvature $\mathcal{C}_{\omega} \left( T_{X,
  J} \right)$ and the trivial equality
\begin{eqnarray}\label{CommutSec}
\nabla^2_{g,\xi,\eta} J- \nabla^2_{g,\eta,\xi} J=[\mathcal{R}_g \left( \xi, \eta \right) ,J],
\end{eqnarray}
we obtain the relation
  \begin{equation}
    \label{Chern-Rm} \mathcal{C}_{\omega} \left( T_{X, J} \right) \left( \xi,
    \eta \right) \mu =\mathcal{R}_g \left( \xi, \eta \right)^{1,0}_J \mu - \frac{1}{4}
    \left( \nabla_{g_{} \nocomma \nocomma, \xi} J \cdot \nabla_{g_{} \nocomma
    \nocomma, \eta} J - \nabla_{g_{} \nocomma \nocomma, \eta} J \cdot
    \nabla_{g_{} \nocomma \nocomma, \xi} J \right) \mu .
  \end{equation}
  Let now $\left( e_k \right)^n_{k = 1} \subset T_{X, p}$ be a $\omega \left(
  p \right)$-orthonormal and $J \left( p \right)$-complex basis. Then 
  \begin{eqnarray*}
    \tmop{Ric}_{_J} (\omega) \left( \xi, J \eta \right) & = & \sum_{k = 1}^n g
    \left( J\mathcal{C}_{\omega} \left( T_{X, J} \right) \left( \xi, J \eta
    \right) e_k, e_k \right) \\
    &  & \\
    & = & - \sum_{k = 1}^n g \left( \mathcal{C}_{\omega} \left( T_{X, J}
    \right) \left( \xi, J \eta \right) e_k, J e_k \right) ,
  \end{eqnarray*}
  thanks to the identity $[\mathcal{C}_{\omega} \left( T_{X, J} \right) \left(
    \xi, \eta \right), J] = 0$. 
Using formula (\ref{Chern-Rm}) we obtain
  \begin{eqnarray*}
    \tmop{Ric}_{_J} (\omega) \left( \xi, J \eta \right) & = & - \sum_{k = 1}^n
    g \left( \mathcal{R}_g \left( \xi, J \eta \right)^{1,0}_J e_k, J e_k \right)\\
    &  & \\
    & + & \frac{1}{4}  \sum_{k = 1}^n g \left( \left( \nabla_{g_{} \nocomma
    \nocomma, \xi} J \cdot \nabla_{g_{} \nocomma \nocomma, J \eta} J -
    \nabla_{g_{} \nocomma \nocomma, J \eta} J \cdot \nabla_{g_{} \nocomma
    \nocomma, \xi} J \right) e_k, J e_k \right) .
  \end{eqnarray*}
We notice now the equalities  
\begin{eqnarray*}
-g \left( \mathcal{R}_g \left( \xi, J \eta \right)^{1,0}_J e_k, J e_k \right)
&=&
-\frac{1}{2} g \left( \mathcal{R}_g \left( \xi, J \eta \right) e_k, J e_k \right)
\\
\\
&+&\frac{1}{2} g \left( \mathcal{R}_g \left( \xi, J \eta \right) J e_k, e_k \right)
\\
\\
&-& g \left( \mathcal{R}_g \left( \xi, J \eta \right) e_k, J e_k \right) ,
\end{eqnarray*}
thanks to the anti-symmetry identity $(\mathcal{R}_g \left( \xi, J \eta \right))^T_g=-\mathcal{R}_g \left( \xi, J \eta \right)$. 
Using the first Bianchi identity and the identity (\ref{anti-Lin-CovJ}), we infer
  \begin{eqnarray*}
    \tmop{Ric}_{_J} (\omega) \left( \xi, J \eta \right) & = & \sum_{k = 1}^n g
    \left( \mathcal{R}_g \left( J \eta, e_k \right) \xi +\mathcal{R}_g \left(
    e_k, \xi \right) J \eta, J e_k \right)\\
    &  & \\
    & + & \frac{1}{4}  \sum_{k = 1}^n g \left( \left( \nabla_{g_{} \nocomma
    \nocomma, \xi} J \cdot \nabla_{g_{} \nocomma \nocomma, \eta} J +
    \nabla_{g_{} \nocomma \nocomma, \eta} J \cdot \nabla_{g_{} \nocomma
    \nocomma, \xi} J \right) e_k, e_k \right) \\
    &  & \\
    & = & \sum_{k = 1}^n \left[ R_g \left( J \eta, e_k, J e_k, \xi \right) +
    g \left( \mathcal{R}_g \left( e_k, \xi \right) J\eta, J e_k \right) \right]\\
    &  & \\
    & + & \frac{1}{4} \tmop{Tr}_{_{\mathbbm{R}}} \left( \nabla_{g_{} \nocomma
    \nocomma, \xi} J \cdot \nabla_{g_{} \nocomma \nocomma, \eta} J \right),
  \end{eqnarray*}
  where $R_g \in C^{\infty} \left( X, S_{\mathbbm{R}}^2
  (\Lambda_{\mathbbm{R}}^2 T^{\ast}_X) \right)$ is the Riemannian curvature
  form. Using its symmetry properties we have
  \begin{eqnarray*}
    R_g \left( J \eta, e_k, J e_k, \xi \right) & = & R_g \left( J e_k, \xi, J
    \eta, e_k \right)\\
    &  & \\
    & = & - R_g \left( J e_k, \xi, e_k, J \eta \right)\\
    &  & \\
    & = & - g \left( \mathcal{R}_g \left( J e_k, \xi \right) J \eta, e_k
    \right),
  \end{eqnarray*}
We infer the equality
\begin{eqnarray*}
    \tmop{Ric}_{_J} (\omega) \left( \xi, J \eta \right) & = & -\sum_{k = 1}^n
    \left[ g \left( J\mathcal{R}_g \left( J e_k, \xi \right) J\eta, J e_k
    \right) + g \left( J\mathcal{R}_g \left( e_k, \xi \right) J\eta, e_k \right)
    \right]\\
    &  & \\
    & + & \frac{1}{4} \tmop{Tr}_{_{\mathbbm{R}}} \left( \nabla_{g_{} \nocomma
    \nocomma, \xi} J \cdot \nabla_{g_{} \nocomma \nocomma, \eta} J \right),
  \end{eqnarray*}  
Using (\ref{CommutSec}) we deduce
  \begin{eqnarray*}
    \tmop{Ric}_{_J} (\omega) \left( \xi, J \eta \right) & = & \sum_{k = 1}^n
    \left[ g \left( \mathcal{R}_g \left( J e_k, \xi \right) \eta, J e_k
    \right) + g \left( \mathcal{R}_g \left( e_k, \xi \right) \eta, e_k \right)
    \right]
 \\
    &  & \\
    & + &    
 \sum_{k = 1}^n
     g \left( J\left[\nabla^2_{g,\xi,J e_k} J-\nabla^2_{g,J e_k, \xi} J\right] \eta, J e_k
    \right)  
\\
    &  & \\
    & + &    \sum_{k = 1}^n  g \left( J \left[\nabla^2_{g,\xi,e_k}J  -\nabla^2_{g,e_k, \xi} J\right] \eta, e_k\right)
    \\
    &  & \\
    & + & \frac{1}{4} \tmop{Tr}_{_{\mathbbm{R}}} \left( \nabla_{g_{} \nocomma
    \nocomma, \xi} J \cdot \nabla_{g_{} \nocomma \nocomma, \eta} J \right),
  \end{eqnarray*}
  and thus
\begin{eqnarray}  
 \tmop{Ric}_{_J} (\omega) \left( \xi, J \eta \right) & = & \tmop{Ric}(g)\left( \xi,  \eta \right)\nonumber
  \\\nonumber
    &  & \\\nonumber
    & + &    
 \sum_{k = 1}^n
     g \left( \left[\nabla^2_{g,\xi,J e_k} J-\nabla^2_{g,J e_k, \xi} J\right] \eta, e_k
    \right)  \nonumber
\\\nonumber
    &  & \\
    & - &    \sum_{k = 1}^n  g \left( \left[\nabla^2_{g,\xi,e_k}J  -\nabla^2_{g,e_k, \xi} J\right] \eta, J e_k\right)\nonumber
    \\\nonumber
    &  & \\
    & + & \frac{1}{4} \tmop{Tr}_{_{\mathbbm{R}}} \left( \nabla_{g_{} \nocomma
    \nocomma, \xi} J \cdot \nabla_{g_{} \nocomma \nocomma, \eta} J \right).\label{interCRicci}
\end{eqnarray}    
We notice now that the identity $J_g^T = - J$, implies $\left( \nabla^2_{g,
\xi, \eta} J \right)_g^T = - \nabla^2_{g, \xi, \eta} J$. Using this we obtain
\begin{eqnarray*}
  \mathbbm{X}_k & \assign & g \left( \left[ \nabla^2_{g, \xi, J e_k}
  J_{_{_{_{}}}} - \nabla^2_{g, J e_k, \xi} J \right] \eta_{_{_{_{_{}}}}}, e_k
  \right)\\
  &  & \\
  & + & g \left( \left[ \nabla^2_{g, e_k, \xi} J_{_{_{_{}}}} - \nabla^2_{g,
  \xi, e_k} J \right] \eta, J_{_{_{_{_{}}}}} e_k \right)\\
  &  & \\
  & = & g \left( \eta_{_{_{_{_{}}}}}, \left[ \nabla^2_{g, J e_k, \xi} J -
  \nabla^2_{g, \xi, J e_k} J_{_{_{_{}}}} \right] e_k + \left[ \nabla^2_{g,
  \xi, e_k} J - \nabla^2_{g, e_k, \xi} J_{_{_{_{}}}} \right] J e_k \right) \\
  &  & \\
  & = & g \left( J \eta_{_{_{_{_{}}}}}, J \left[ \nabla^2_{g, J e_k, \xi} J -
  \nabla^2_{g, \xi, J e_k} J_{_{_{_{}}}} \right] e_k + J \left[ \nabla^2_{g,
  \xi, e_k} J - \nabla^2_{g, e_k, \xi} J_{_{_{_{}}}} \right] J e_k \right) .
\end{eqnarray*}
Taking a covariant derivative of the identity $\nabla_g J \cdot J = - J
\nabla_g J$ we infer
\begin{eqnarray*}
  J \nabla^2_{g, \xi, \eta} J & = & - \nabla^2_{g, \xi, \eta} J \cdot J -
  \nabla_{g, \xi} J \cdot \nabla_{g, \eta} J - \nabla_{g, \eta} J \cdot
  \nabla_{g, \xi} J .
\end{eqnarray*}
Using this we deduce
\begin{eqnarray*}
  &  & J \left[ \nabla^2_{g, J e_k, \xi} J - \nabla^2_{g, \xi, J e_k}
  J_{_{_{_{}}}} \right] e_k + J \left[ \nabla^2_{g, \xi, e_k} J - \nabla^2_{g,
  e_k, \xi} J_{_{_{_{}}}} \right] J e_k\\
  &  & \\
  & = & \left[ \nabla^2_{g, \xi, J e_k} J_{_{_{_{}}}} - \nabla^2_{g, J e_k,
  \xi} J \right] J e_k + \left[ \nabla^2_{g, \xi, e_k} J - \nabla^2_{g, e_k,
  \xi} J_{_{_{_{}}}} \right] e_k\\
  &  & \\
  & - & \nabla_{g, J e_k} J \cdot \nabla_{g, \xi} J e_k - \nabla_{g, \xi} J
  \cdot \nabla_{g, J e_k} J e_k\\
  &  & \\
  & + & \nabla_{g, \xi} J \cdot \nabla_{g, J e_k} J e_k + \nabla_{g, J e_k} J
  \cdot \nabla_{g, \xi} J e_k\\
  &  & \\
  & - & \nabla_{g, \xi} J \cdot \nabla_{g, e_k} J \cdot J e_k - \nabla_{g,
  e_k} J \cdot \nabla_{g, \xi} J \cdot J e_k\\
  &  & \\
  & + & \nabla_{g, e_k} J \cdot \nabla_{g, \xi} J \cdot J e_k + \nabla_{g,
  \xi} J \cdot \nabla_{g, e_k} J \cdot J e_k\\
  &  & \\
  & = & \left[ \nabla^2_{g, \xi, J e_k} J_{_{_{_{}}}} - \nabla^2_{g, J e_k,
  \xi} J \right] J e_k + \left[ \nabla^2_{g, \xi, e_k} J - \nabla^2_{g, e_k,
  \xi} J_{_{_{_{}}}} \right] e_k,
\end{eqnarray*}
by obvious diagonal cancellations. We notice now that the identity (4.4)
implies $\nabla^{\ast}_g J = 0$. A covariant derivative of this identity
implies $\tmop{Tr}_g \left( \xi \neg \nabla_g^2 J \right) = 0$. We deduce
\begin{eqnarray*}
  \sum_{k = 1}^n \mathbbm{X}_k & = & g \left( \nabla^{\ast}_g
  \widehat{\nabla_g J} \xi, J \eta \right) .
\end{eqnarray*}
This combined with (\ref{interCRicci}) implies the required formula (\ref{Chern-Ricci-Rm-Ricci}). 
\end{proof}

Let $\Omega > 0$ be a smooth volume form over an oriented Riemannian manifold
$(X, g)$. We define the $\Omega$-Bakry-Emery-Ricci tensor of $g$ as
\begin{eqnarray*}
  \tmop{Ric}_g (\Omega)  \assign  \tmop{Ric} (g)  + \nabla_g d \log \frac{dV_g}{\Omega} .
\end{eqnarray*}
We set for notations simplicity
\begin{eqnarray*}
  \left[ \tmop{Tr}_{_{\mathbbm{R}}} \left( \nabla_{g_{} \nocomma \nocomma,
  \bullet} J \cdot \nabla_{g_{} \nocomma \nocomma, \bullet} J \right) \right]
  \left( \xi, \eta \right)  \assign  \tmop{Tr}_{_{\mathbbm{R}}} \left(
  \nabla_{g_{} \nocomma \nocomma, \xi} J \cdot \nabla_{g_{} \nocomma \nocomma,
  \eta} J \right) .
\end{eqnarray*}
\begin{lemma}
  Let $\left( X, J, g \right)$ be an almost K\"ahler manifold with symplectic
  form $\omega \assign g J$ and let $\Omega > 0$ be a smooth volume form. Then
  hold the decomposition formula
  \begin{eqnarray}
    \label{cx-dec-Ric} \tmop{Ric}_g (\Omega) &=& - \tmop{Ric}_{_J} (\Omega) J 
    +\omega(\bullet,\nabla_g^\ast\widehat{\,\nabla_g J\,}\bullet)
     - \frac{1}{4}
    \tmop{Tr}_{_{\mathbbm{R}}} \left( \nabla_{g_{} \nocomma \nocomma, \bullet}
    J \cdot \nabla_{g_{} \nocomma \nocomma, \bullet} J \right)\nonumber
    \\\nonumber
    & &\\
    &+&g \left( \overline{\partial}_{T_{X, J}} \nabla_g \log \frac{dV_g}{\Omega}
    - \nabla_g \log \frac{dV_g}{\Omega} \neg N_{_J} \right) .
  \end{eqnarray}
\end{lemma}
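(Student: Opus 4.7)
The plan is to write $\tmop{Ric}_g(\Omega) = \tmop{Ric}(g) + \nabla_g df$ with $f := \log \frac{dV_g}{\Omega}$, replace $\tmop{Ric}(g)$ by $\tmop{Ric}_{_J}(\omega)$ via the previous lemma, convert $\tmop{Ric}_{_J}(\omega)$ to $\tmop{Ric}_{_J}(\Omega)$ by a change-of-volume formula on $K_{X,J}$, and finally absorb the leftover combination of the Hessian and the $dd^c_{_J}$-correction into the $\overline{\partial}_{T_{X,J}}$ expression appearing on the right-hand side.

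The first step is to rewrite (\ref{Chern-Ricci-Rm-Ricci}) as an equality of symmetric bilinear forms
\[
\tmop{Ric}(g) \;=\; -\,\tmop{Ric}_{_J}(\omega)\,J \;-\; \tfrac{1}{4}\tmop{Tr}_{_{\mathbbm{R}}}\bigl(\nabla_{g,\bullet}J \cdot \nabla_{g,\bullet}J\bigr),
\]
where $\rho J$ stands for the symmetric form $(\xi,\eta)\mapsto \rho(J\xi,\eta)$ attached to a $J$-invariant $2$-form $\rho$; this uses only the antisymmetry of $\tmop{Ric}_{_J}(\omega)$ together with (\ref{J-inv-CRic}). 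The second step exploits the almost K\"ahler identity $dV_g=\omega^n/n!$ and the formula $\tmop{Ric}_{_{J_t}}(\Omega_t) = \tmop{Ric}_{_{J_t}}(\Omega_\tau) - dd^c_{_{J_t}}\log(\Omega_t/\Omega_\tau)$ already used in the proof of Proposition \ref{AC-var-O-Rc-fm}, applied to $\Omega_t=dV_g$ and $\Omega_\tau = \Omega$. This yields $\tmop{Ric}_{_J}(\omega) = \tmop{Ric}_{_J}(\Omega) - dd^c_{_J} f$. Substituting into the first step gives
\[
\tmop{Ric}_g(\Omega) \;=\; -\,\tmop{Ric}_{_J}(\Omega)\,J \;-\; \tfrac{1}{4}\tmop{Tr}_{_{\mathbbm{R}}}\bigl(\nabla_{g,\bullet}J \cdot \nabla_{g,\bullet}J\bigr) \;+\; \bigl[\nabla_g df + (dd^c_{_J} f)\,J\bigr].
\]

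The heart of the proof, and what I expect to be the main obstacle, is then the pointwise identity
\[
\nabla_g df + (dd^c_{_J} f)\,J \;=\; g\bigl(\overline{\partial}_{T_{X,J}} \nabla_g f \;-\; \nabla_g f \neg\, N_{_J}\bigr).
\]
I would establish it by computing the right-hand side via the Chern connection $D^{\omega}_{T_{X,J}}$, whose $(0,1)$-part equals $\overline{\partial}_{T_{X,J}}$. Using the relation (\ref{Chern-LC}) between Chern and Levi-Civita, applied to the real vector field $V=\nabla_g f$, one gets $D^{\omega}_{T_{X,J},\eta}\nabla_g f = \nabla_{g,\eta}\nabla_g f - \tfrac{1}{2}J\,\nabla_{g,\eta}J\cdot\nabla_g f$. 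Taking the $(0,1)$-part in $\eta$ splits the Hessian term $\nabla_g df$ into its $J$-invariant and $J$-anti-invariant parts, the former matching the expansion of $(dd^c_{_J} f)J$ (since $d^c_{_J} f = -df\circ J$ makes $(dd^c_{_J} f)(J\xi,\eta)$ the $(1,1)$-piece of the Hessian), while the $J\nabla_g J$ correction rewrites, by the Nijenhuis identity (\ref{covJ-Nij}) and the antilinearity (\ref{anti-Lin-CovJ}), precisely as $-\,\nabla_g f\neg N_{_J}$. The bookkeeping is delicate but closely parallels the calculation of STEP IIb in the proof of Proposition \ref{AC-var-O-Rc-fm}, where the same combination $\nabla_{g_\tau}\dot J_\tau - \tfrac{1}{2}J_\tau\dot J_\tau \nabla_{g_\tau}J_\tau$ is produced and its $N_{_{J_\tau}}$-contribution is extracted by exactly the same trace computation (\ref{Trace1}).

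Once the pointwise identity above is proved, combining it with the substitution produced in the first two steps yields the decomposition formula (\ref{cx-dec-Ric}), completing the proof.
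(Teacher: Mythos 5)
Your proposal is correct and follows essentially the same route as the paper: both reduce the statement to the comparison identity (\ref{Chern-Ricci-Rm-Ricci}) together with the Hessian decomposition (\ref{cx-dec-Hess}), the passage from $\tmop{Ric}_{_J}(\omega)$ to $\tmop{Ric}_{_J}(\Omega)$ being the change-of-volume identity $\tmop{Ric}_{_J}(\omega)=\tmop{Ric}_{_J}(\Omega)-d d_{_J}^c\log\frac{dV_g}{\Omega}$ that the paper leaves implicit. The only difference is that the paper obtains (\ref{cx-dec-Hess}) by citing a modification of Lemma 29 in \cite{Pal4}, whereas you sketch its direct verification via the Chern--Levi-Civita relation (\ref{Chern-LC}) and the Nijenhuis identities, which is consistent with the paper's argument.
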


\begin{proof}.
  This formula follows directly from the identities 
  $$
  \tmop{Ric}_{_J} (\Omega)=\tmop{Ric}_{_J} (\omega^n)+d d^c_{_J}\log \frac{\,\omega^n}{\Omega}\,,
  $$
 $\tmop{Ric}_{_J} (\omega^n)=\tmop{Ric}_{_J} (\omega)$, the identity (\ref{Chern-Ricci-Rm-Ricci}) and the decomposition formula
  \begin{eqnarray*}
    \nabla_g df = - \hspace{0.25em} d d^c_{_J} f \cdot J +
    g \left( \overline{\partial}_{T_{X, J}} \nabla_g f - \nabla_g f \neg
    N_{_J} \right),
\end{eqnarray*}  
  for all twice differentiable function $f$. The latter follows from a
  straightforward modification of the proof of lemma 29 in \cite{Pal4}.
\end{proof}
\\
\\
{\noindent}\tmtextbf{Acknowledgments.} I warmly thank the referee for pointing out a few inaccuracies in the original version of this manuscript.

\vspace{0.6cm}
\noindent
Nefton Pali
\\
Universit\'{e} Paris Sud, D\'epartement de Math\'ematiques 
\\
B\^{a}timent 425 F91405 Orsay, France
\\
E-mail: \textit{nefton.pali@math.u-psud.fr}
\end{document}